\newcommand{\bg}{\begin{equation}}
\newcommand{\ed}{\end{equation}}
\newcommand{\bga}{\begin{eqnarray}}
\newcommand{\eda}{\end{eqnarray}}
\newcommand{\pf}{\textbf{Proof:\ }}
\def\cbdu{\par{\raggedleft$\Box$\par}}
\newtheorem {Theorem}  {Theorem}
\numberwithin{Theorem}{section}
\newtheorem {Lemma}[Theorem]  {Lemma}
\newtheorem {Proposition}[Theorem]{Proposition}
\theoremstyle{definition}
\newtheorem{Definition}[Theorem]{Definition}
\theoremstyle{remark}
\newtheorem{Remark}[Theorem]{\bf Remark}
\chardef\csname pre amssym.def
\def\undefine#1{\let#1\undefined}
\def\newsymbol#1#2#3#4#5{\let\next@\relax
 \ifnum#2=\@ne\let\next@\msafam@\else
 \ifnum#2=\tw@\let\next@\msbfam@\fi\fi
 \mathchardef#1="#3\next@#4#5}
\def\mathhexbox@#1#2#3{\relax
 \ifmmode\mathpalette{}{\m@th\mathchar"#1#2#3}%
 \else\leavevmode\hbox{$\m@th\mathchar"#1#2#3$}\fi}
\def\hexnumber@#1{\ifcase#1 0\or 1\or 2\or 3\or 4\or 5\or 6\or 7\or 8\or
 9\or A\or B\or C\or D\or E\or F\fi}
\font\teneufm=eufm10 \font\seveneufm=eufm7 \font\fiveeufm=eufm5
\newcounter{remark}
\newcommand{\R}{\mathbf{R}}
\renewcommand{\div}{\mbox{div}}
\def  \R   {{\mathbb R}}
\def  \12  {{\frac{1}{2}}}
\def\build#1_#2^#3{\mathrel{\mathop{\kern 0pt#1}\limits_{#2}^{#3}}}
\begin{document}

\title[Regularity criteria for NSE and MHD]{Regularity criteria for the 3D Navier-Stokes and MHD
equations}

\author [Alexey Cheskidov]{Alexey Cheskidov}
\address{Department of Mathematics, Stat. and Comp. Sci.,  University of Illinois Chicago, Chicago, IL 60607,USA}
\email{acheskid@uic.edu} 
\author [Mimi Dai]{Mimi Dai}
\address{Department of Mathematics, Stat. and Comp. Sci.,  University of Illinois Chicago, Chicago, IL 60607,USA}
\email{mdai@uic.edu} 

\thanks{The work of A. Cheskidov was partially supported by NSF Grant
DMS--1517583 and  DMS--1909849; the work of M. Dai was partially supported by NSF Grants DMS--1815069 and DMS-- 2009422.}

\begin{abstract}
We prove that a solution to the 3D Navier-Stokes or MHD equations does not blow up at $t=T$ provided
$\displaystyle \limsup_{q \to \infty} \int_{\mathcal{T}_q}^T \|\Delta_q(\nabla \times u)\|_\infty \, dt$ is small enough,
where $u$ is the velocity, $\Delta_q$ is the Littlewood-Paley projection, and $\mathcal T_q$ is a certain sequence
such that $\mathcal T_q \to T$ as $q \to \infty$. This improves many existing regularity criteria.

\bigskip

KEY WORDS: Navier-Stokes equations; magneto-hydrodynamics system; regularity; blow-up criteria.

\hspace{0.02cm}CLASSIFICATION CODE: 76D03, 35Q35.
\end{abstract}

\maketitle

\section{Introduction}

The three-dimensional incompressible
magneto-hydrodynamics (MHD) equations are given by
\begin{equation}\label{MHD}
\begin{split}
u_t+u\cdot\nabla u-b\cdot\nabla b+\nabla p=\nu\Delta u,\\
b_t+u\cdot\nabla b-b\cdot\nabla u=\mu\Delta b,\\
\nabla \cdot u=0, \ \ \ \nabla \cdot b=0.
\end{split}
\end{equation}
Here $x\in\mathbb{R}^3$, $t\geq 0$, $u$ is the fluid velocity, $p$ is the fluid pressure and $b$
is the magnetic field. The parameter $\nu$ denotes the kinematic viscosity coefficient of the fluid, and $\mu$ is the reciprocal of the magnetic Reynolds number. 
The MHD equations are supplemented with initial conditions
\begin{equation}
u(x,0)=u_0(x),\;\;\; b(x,0)=b_0(x),
\end{equation}
where $u_0$ and $b_0$ are divergence free vector fields in $L^2(\mathbb{R}^3)$.
When the magnetic field $b(x,t)$ vanishes,
(\ref{MHD}) reduce to the incompressible Navier-Stokes
equations (NSE). Solutions to the MHD equations also share the same scaling
property with solutions to the NSE, that is,
\begin{equation}
u_\lambda(x,t) =\lambda u(\lambda x, \lambda^2t), \ b_\lambda(x,t)
=\lambda b(\lambda x, \lambda^2t), \ p_\lambda(x,t) =\lambda^2
p(\lambda x, \lambda^2t) \notag
\end{equation}
solve (\ref{MHD}) with the initial data
\begin{equation}
u_{0\lambda} =\lambda u_0(\lambda x), \ b_{0\lambda} =\lambda
b_0(\lambda x), \notag
\end{equation}
provided $u(x, t)$ and $b(x, t)$ solve (\ref{MHD}) with the initial data $u_0(x)$ and $b_0(x)$. Spaces that are invariant under the above scaling are called critical.
Examples of critical spaces include the Prodi-Serrin spaces
\[
L^l(0,\infty; L^r), \qquad \frac{2}{l}+\frac{3}{r}=1,
\]
or their Besov counterparts
\[
L^l\left(0,\infty; B^{-1+\frac{2}{l}+\frac{3}{r} }_{r,\infty}\right).
\]

The existence of global regular solutions for the NSE and the MHD system in 3D is an outstanding open problem. The study of these models in critical spaces has been one of the focuses 
of research activities since the initial work of Kato \cite{Kato}. It is known that if a solution belongs  to a certain critical space, then it is regular.
For instance, a smooth solution to the  NSE $(0,T)$ does not blow up at $T$ if
\begin{equation}\label{BKMcriterion}
\int_0^T\|\nabla \times u\|_\infty\, dt<\infty,
\end{equation}
which is known as the Beale-Kato-Majda (BKM) condition \cite{BKM}. In term of scaling, this is the borderline of Prodi-Serrin criteria when the time integrability index is one, which does not
require any dissipation and also applies to the Euler equations.

For the Euler equations, the BKM condition \eqref{BKMcriterion} was weakened by Planchon \cite{P} to
\begin{equation}\label{BKM-extended}
\lim_{\varepsilon\to 0}\sup_{q}\int_{T-\varepsilon}^T\|\Delta_q(\nabla\times u)\|_{\infty}\, dt<c,
\end{equation}
for small enough $c$, where $\Delta_q$ is the Littlewood-Paley projection.

For the 3D NSE, Cheskidov and Shyvdkoy \cite{CS} proved the following improvement of the Beale-Kato-Majda condition \eqref{BKMcriterion}:
\begin{equation} \label{eq:BKM-Lambda}
\int_0^T\|\nabla \times u_{\leq Q}\|_{B^0_{\infty,\infty}}\, dt<\infty, 
\end{equation}
where $2^{Q(t)}=\Lambda(t)$ and $\Lambda(t)$ is a wavenumber constructed based on the structure of the NSE.
This condition is also weaker than all the Prodi-Serrin type regularity criteria. Indeed,
\[
u_{\leq Q} \in L^r(0,T; B^{-1+\frac{2}{r}}_{\infty,\infty}) \qquad \text{for some} \qquad 1\leq r <\infty
\]
implies \eqref{eq:BKM-Lambda} (see \cite{CS}). Note that $r$ cannot be taken as $\infty$ here. A regularity criterion in the limit case $r=\infty$ has been established by Escauriaza, Seregin, and Sverak \cite{ESS} in $L^\infty(0;T,L^3)$, and
extended by Gallagher, Koch, and Planchon to $L^\infty(0,T;B^{-1+ \frac{3}{s}}_{s,q})$ with $3 < s,q< \infty$ in \cite{GKP}.

For the MHD system, Caflisch, Klapper and Steele \cite{CKS} obtained the BKM-type condition
in the inviscid case, but in terms of both $u$ and $b$. Cannone, Chen, and Miao \cite{CCM} 
proved the extended version of the BKM criterion, as \eqref{BKM-extended}, but also in terms of
both $u$ and $b$.
In the viscous case, Wu \cite{Wu04} obtained some Prodi-Serrin type criteria which impose conditions on both $u$ and $b$. However there have been various indications, both numerically \cite{Ha, PPS} and theoretically,  
suggesting that the velocity field $u$ plays a dominant role in interactions between $u$ and $b$. This point is still not clear though. In fact, in \cite{DQS}, the authors showed that even when the initial velocity vanishes, one can observe norm inflation in velocity at a small time due to the interaction of the magnetic field and the velocity field (see also \cite{CD} for norm inflation results in a
wider range of spaces). Nevertheless, a large amount of work has been devoted to establishing criteria which impose 
conditions only on the velocity, see \cite{CG, CMZ08, CMZ, HW, HX, LZ, Wu08, Zh05}.  In particular, He, Xin, and Zhou \cite{HX,Zh05} obtained the following Prodi-Serrin conditions for regularity:
\begin{equation}\label{PS}
\begin{split}
&u\in L^l(0,T; L^r) \qquad \mbox { with } \qquad \frac2l+\frac3r=1 \qquad \mbox { for } 3<r\leq \infty;\\
&\nabla u\in L^l(0,T; L^r) \qquad \mbox { with } \qquad \frac2l+\frac3r=2 \qquad \mbox { for } 3< r\leq \infty.
\end{split}
\end{equation}
He and Xin \cite{HX} also obtained the following regularity condition:
\begin{equation} \label{eq:ContCondition}
u\in C([0,T]; L^3).
\end{equation}
Finally, remarkable improvements were made by Chen, Miao and Zhang who obtained the extended BKM criterion \eqref{BKM-extended} in
terms of the velocity $u$ only \cite{CMZ},
as well as the Prodi-Serrin regularity criterion in terms of Besov spaces \cite{CMZ08}:
\begin{equation}\label{CMZ08}
\begin{split}
u\in L^l(0,T;B^s_{r,\infty}) \qquad \mbox { with } \qquad \frac2l+\frac3r=1+s, \\
\frac3{1+s}<r\leq \infty, -1<s\leq 1, \qquad
 \mbox { and } \qquad (r,s)\neq(\infty, 1).
 \end{split}
\end{equation}

In this paper, we will establish a new regularity criterion for the MHD system, which imposes condition only on low modes of the velocity field.
The criterion resembles the extended Beale-Kato-Majda condition \eqref{BKM-extended}  \cite{CCM, CMZ,P}, but in fact it is weaker as we take
advantage of a dissipation wavenumber $\Lambda_r(t)$ for $r\in[2,6)$ similar to the one introduced by Cheskidov and Shvydkoy \cite{CS} for the 3D NSE.  
The dissipation wavenumber $\Lambda_r(t)$ is defined as
\begin{equation}\label{wave}
\Lambda_r(t)=\min \left\{\lambda_q:\lambda_p^{-1+\frac 3r}\|u_p(t)\|_r<c_r\min\{\nu,\mu\}, \forall p>q, q\in \mathbb{N} \right\},
\end{equation}
where $\lambda_q =2^q$, $u_p = \Delta_p u$ is the Littlewood-Paley projection of $u$,  and $c_r$ is an adimensional  constant that depends only on $r$. Here $r\in [2,6)$ in the case of the
the MHD system (c.f. proof of Proposition \ref{prop}), and $r \in [2,\infty]$ for the NSE (when $b \equiv 0$). Let $Q_r(t)\in\mathbb N$ be such that $\lambda_{Q_r(t)}=\Lambda_r(t)$.

The wavenumber $\Lambda_r(t)$, defined for each individual solution (in terms of $u$ only), separates the inertial range from the dissipation range where
the viscous terms $\Delta u$ and $\Delta b$ dominate.
The dissipation wavenumber is
defined so that the critical $B^{-1+\frac{3}{r}}_{\infty,\infty}$ norm of $u$ is small above $\Lambda_r$ for some $2\leq r<6$. So it is larger than the
dissipation wavenumber for the NSE defined with $r=\infty$ in \cite{CS}. This is due to the fact that there are less cancellations and more terms
to control  for the MHD system.
It is worth mentioning that  this method is also applied to the supercritical quasi-geostrophic equation in \cite{Dsqg} and to the Hall-magneto-hydrodynamics system in \cite{Dhmhd}.
Regularity criteria obtained in these papers are weaker than all the corresponding Prodi-Serrin type criteria.  
The dissipation wavenumber is also related to the determining wavenumber used in \cite{CD-sqg-determ, CDK} to estimate the number of determining modes for
fluid flows.

Our main result states as follows.
\begin{Theorem}\label{thm}
Let $(u, b)$ be a weak solution to (\ref{MHD}) on $[0,T]$. Assume that $(u(t), b(t))$ is regular on $(0,T)$. If 
for any $2\leq r<6$ ($2\leq r \leq \infty$ when $b\equiv 0$),
\bg\label{criterion}
\limsup_{q \to \infty} \int_{T/2}^T 1_{q\leq Q_r(t)}\|\Delta_q(\nabla \times u)\|_\infty \, dt  \leq c_r,
\ed
with a small constant $c_r>0$ depending on $r$, then $(u(t), b(t))$ is regular on $(0,T]$.
\end{Theorem}

\begin{Remark}
The choice of constant $c_r$ will be made specific later on, see (\ref{parameter-cr}).
\end{Remark}

We will show that \eqref{criterion} is weaker than various existing regularity criteria. Indeed,
define
\[
\mathcal{T}_q = \sup\{t \in (T/2,T): Q_r(\tau) < q,  \ \forall \tau \in(T/2,t)\}.
\]
Note that $\mathcal{T}_q$ is increasing. In addition, we show that
\[
\lim_{q \to \infty} \mathcal{T}_q = T.
\]
\begin{Theorem} \label{thm:compare}
Regularity condition \eqref{criterion} is satisfied provided one of the following holds.
\begin{enumerate}
\item
$\displaystyle \limsup_{q \to \infty} \int_{\mathcal{T}_q}^T \|\Delta_q(\nabla \times u)\|_\infty \, dt \leq c_r.$
\vspace{0.1in}
\item
$\displaystyle\lim_{\varepsilon \to 0} \limsup_{q \to \infty} \int_{T-\varepsilon}^T\|\Delta_q(\nabla \times u)\|_\infty \, dt \leq c_r.$
\vspace{0.1in}
\item
$\displaystyle\int_0^T \sup_{q \leq Q_r(t)} \|\Delta_q(\nabla \times u)\|_\infty \, dt < \infty$.
\vspace{0.1in}
\item
$\displaystyle  \limsup_{q \to \infty}\int_0^T 1_{q\leq Q_r(t)}\left(\lambda_q^{-1+\frac2l+\frac{3}{r}}\|u_q(t)\|_{r}\right)^l\, dt \lesssim c_r^l \min\{\nu,\mu\}^{l-1}$,  $1\leq l< \infty$, $2\leq r<6$ ($2\leq r\leq \infty$ when $b\equiv 0$).
\vspace{0.1in}
\item
$\displaystyle \limsup_{q \to \infty}\int_{\mathcal{T}_q}^T \left(\lambda_q^{-1+\frac2l+\frac{3}{r}}\|u_q(t)\|_{r}\right)^l\, dt \lesssim c_r^l\min\{\nu,\mu\}^{l-1}$,  $1\leq l<\infty$, $2\leq r<6$ ($2\leq r\leq \infty$ when $b\equiv 0$).
\vspace{0.1in}
\item
$\displaystyle \lim_{\epsilon \to 0} \limsup_{q \to \infty} \int_{T-\epsilon}^T \left(\lambda_q^{-1+\frac2l+\frac{3}{r}}\|u_q(t)\|_{r}\right)^l\, dt \lesssim c_r^l\min\{\nu,\mu\}^{l-1}$,  $1\leq l<\infty$, $2\leq r<6$ ($2\leq r\leq \infty$ when $b\equiv 0$).
\vspace{0.1in}
\item
$\displaystyle \int_0^T \left( \|u_{\leq Q_r(t)}\|_{B^{-1+ \frac{2}{l} + \frac{3}{r}}_{r,\infty}}\right)^l \, dt
< \infty$,   $1\leq l<\infty$, $2\leq r<6$ ($2\leq r\leq \infty$ when $b\equiv 0$).
\vspace{0.1in}
\item
$\displaystyle \limsup_{t \to T-} \|u(t)-u(T)\|_{B^{-1+ \frac{3}{r}}_{r,\infty}} < \frac{c_r}{2}\min\{\nu,\mu\}$, $2\leq r<6$
($2\leq r\leq \infty$ when $b\equiv 0$).
\end{enumerate}
\end{Theorem}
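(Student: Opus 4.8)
I would reduce each of the eight conditions to the single hypothesis \eqref{criterion} of Theorem \ref{thm}, which then does the rest. Throughout I use three elementary facts: (i) $\mathcal T_q$ is nondecreasing and $\mathcal T_q\to T$; (ii) $1_{q\le Q(\tau)}=0$ for $\tau\in(T/2,\mathcal T_q)$, so the integral in \eqref{criterion} is unchanged if $(T/2,T)$ is replaced by $(\mathcal T_q,T)$ and, more precisely, $\{q\le Q\}\cap(T/2,T)\subset(\mathcal T_q,T)$; (iii) the Bernstein-type inequalities $\lambda_q\|u_q\|_\infty\le C\|\Delta_q(\nabla\times u)\|_\infty$ (which uses $\nabla\cdot u=0$) and $\|u_q\|_\infty\le C\lambda_q^{3/r}\|u_q\|_r$, with $C$ depending only on the Littlewood--Paley decomposition. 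The various adimensional constants $c_r$ are taken small enough (and consistently $\nu,\mu$-dependent) that the numerical factors generated below are absorbed; I will not track them.

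\textbf{The key estimate, handling (4)--(7).} Write $\alpha=-1+\tfrac2l+\tfrac3r$. The plan is first to bound the \emph{time measure} of $\{Q\ge q\}$. If $\tau\in(T/2,T)$ and $Q(\tau)=p$, then by minimality of the dissipation wavenumber in \eqref{wave} one has $\lambda_p^{-1+\frac3r}\|u_p(\tau)\|_r\ge c_r\min\{\nu,\mu\}$, so on $\{Q=p\}$ the integrand at scale $p$ of (4)--(7) is $\ge\lambda_p^{2}(c_r\min\{\nu,\mu\})^l$. Since $\{Q=p\}\cap(T/2,T)\subset[\mathcal T_p,T)$, where $1_{p\le Q}=1$, comparing with the hypothesis at scale $p$ gives $|\{\tau\in(T/2,T):Q(\tau)=p\}|\lesssim\lambda_p^{-2}$ for all large $p$, and summing the geometric series, $|\{\tau\in(T/2,T):Q(\tau)\ge q\}|\lesssim\lambda_q^{-2}$ for all large $q$. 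Now on $\{q\le Q\}\cap(T/2,T)\subset(\mathcal T_q,T)$, Bernstein and H\"older in time give
\[
\int_{\{q\le Q\}\cap(T/2,T)}\lambda_q\|u_q\|_\infty\,d\tau\le C\lambda_q^{1+\frac3r}\big|\{q\le Q\}\cap(T/2,T)\big|^{1-\frac1l}\Big(\int_{\mathcal T_q}^T 1_{q\le Q}\|u_q\|_r^l\,d\tau\Big)^{\frac1l},
\]
and the last integral equals $\lambda_q^{-\alpha l}\int_{\mathcal T_q}^T 1_{q\le Q}(\lambda_q^{\alpha}\|u_q\|_r)^l\,d\tau$, which the hypothesis bounds. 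Inserting the measure bound, the net power of $\lambda_q$ is $1+\tfrac3r-2(1-\tfrac1l)-\alpha=0$ and the powers of $\min\{\nu,\mu\}$ likewise cancel; one is left with a universal constant times (the limsup of) the right-hand side in the hypothesis, which yields \eqref{criterion}. This proves (4) and (5); (6) reduces to (5) since $\mathcal T_q\to T$ forces $\int_{\mathcal T_q}^T\le\int_{T-\epsilon}^T$ eventually; and in (7) one runs the same scheme with the hypothesis constant replaced by $\int_{\mathcal T_q}^T\|u_{\le Q(t)}\|_{B^{\alpha}_{r,\infty}}^l\,dt$, which is the tail of a convergent integral and hence tends to $0$, so the limsup in \eqref{criterion} is actually $0$.

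\textbf{Conditions (1)--(3) and (8).} For (3), $g(t):=\sup_{q\le Q(t)}\|\Delta_q(\nabla\times u)(t)\|_\infty\in L^1(0,T)$ dominates $C^{-1}1_{q\le Q(t)}\lambda_q\|u_q(t)\|_\infty$ for every $q$, so $\int_{T/2}^T 1_{q\le Q}\lambda_q\|u_q\|_\infty\,d\tau\le C\int_{\mathcal T_q}^T g\,d\tau\to0$. For (1), $\int_{T/2}^T 1_{q\le Q}\lambda_q\|u_q\|_\infty\,d\tau\le C\int_{\mathcal T_q}^T\|\Delta_q(\nabla\times u)\|_\infty\,d\tau$, and taking $\limsup_q$ gives \eqref{criterion}; (2) reduces to (1) using $\mathcal T_q\to T$. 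For (8), I claim the hypothesis forces $Q$ to be bounded on a left neighbourhood of $T$: writing $u_q(\tau)=u_q(T)+(u_q(\tau)-u_q(T))$ and using that $u(t)$ is regular for $t<T$ (so $\lambda_q^{-1+\frac3r}\|u_q(t)\|_r\to0$ as $q\to\infty$), one gets $\limsup_q\lambda_q^{-1+\frac3r}\|u_q(T)\|_r\le\limsup_{t\to T-}\|u(t)-u(T)\|_{B^{-1+\frac3r}_{r,\infty}}\le\tfrac{c_r}{2}$; combining this with the hypothesis for $\tau$ close to $T$ gives $\lambda_q^{-1+\frac3r}\|u_q(\tau)\|_r<c_r\min\{\nu,\mu\}$ for all large $q$ and all $\tau$ near $T$, i.e. $Q(\tau)$ stays bounded as $\tau\to T-$. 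Hence for large $q$, $1_{q\le Q(\tau)}=0$ on $(\mathcal T_q,T)$ (as $\mathcal T_q\to T$), and \eqref{criterion} holds trivially.

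\textbf{Main obstacle.} The difficulty is entirely in the estimate for (4)--(7): the naive argument (Bernstein from $L^r$ to $L^\infty$, then H\"older in time) loses a factor $\lambda_q^{2-2/l}$ and collapses for $l>1$. What rescues it is bounding the Lebesgue measure of $\{\tau:Q(\tau)\ge q\}$ — possible \emph{precisely because} $Q$ is the minimal wavenumber in \eqref{wave}, so the hypothesis must saturate at $\Lambda(\tau)$ — and then interpolating this measure bound against the $L^l_t$ bound on $\|u_q\|_r$. The cancellation of the powers of $\lambda_q$ and of $\min\{\nu,\mu\}$ is not an accident: it is the reflection of the fact that all quantities involved, in \eqref{criterion} and in (4)--(7), are invariant under the natural scaling of \eqref{MHD}.
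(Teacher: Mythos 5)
Your proposal is correct in substance and follows the same overall reduction as the paper: everything is funneled into \eqref{criterion} using that $1_{q\leq Q(\tau)}=0$ on $(T/2,\mathcal T_q)$, that $\mathcal T_q\to T$, and Bernstein plus H\"older in time. Your treatment of (1)--(3) coincides with the paper's chain of inequalities, and for (8) you essentially reproduce, in a self-contained way, the argument the paper outsources to Theorem 3.1 of \cite{CS10}: the hypothesis forces $\Lambda_r$ to stay bounded near $T$, so (together with interior regularity and the definition of $\mathcal T_q$) the indicator in \eqref{criterion} vanishes a.e.\ for all large $q$. The genuinely different ingredient is your handling of (4)--(7). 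The paper's Lemma~\ref{compare1} applies H\"older with the weight $\Lambda_r^2(t)$ on $\{q\leq Q(t)\}$ and then converts $\Lambda_r^2$ pointwise, via the saturation bound \eqref{Q}, into $\bigl(\Lambda_r^{-1+\frac2l+\frac3r}\|u_{Q}\|_r\bigr)^l$, absorbing that factor into the limsup of the hypothesis quantity, so that the constant is exactly $(c_r\min\{\nu,\mu\})^{1-l}$ and conditions (4)--(6) yield \eqref{criterion} with the same $c_r$. You instead combine \eqref{Q} with the hypothesis to get the distribution bound $|\{Q\geq q\}|\lesssim \min\{\nu,\mu\}^{-1}\lambda_q^{-2}$ and use that measure as the H\"older weight. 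Both mechanisms exploit the same saturation at $p=Q(t)$ and the same scaling cancellation; your version has the merit of making fully explicit the step that Lemma~\ref{compare1} treats most tersely (bounding the $\Lambda_r^2$-weighted factor by the limsup of the hypothesis integrals), at the price of universal constants.

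On that last point, one correction: your claim that the universal factors (the Bernstein constant, the factor from summing $\sum_{p\geq q}\lambda_p^{-2}$) ``are absorbed by taking $c_r$ small enough'' is not a valid fix, because both the thresholds in (1), (2), (4)--(6) and the target threshold in \eqref{criterion} scale linearly in $c_r$; shrinking $c_r$ shrinks both sides and the factor $C>1$ survives. What your argument literally proves is the theorem with those thresholds weakened by a fixed universal factor (e.g.\ $c_r/C$ in place of $c_r$). Since $c_r$ is an unspecified small constant this is qualitatively the same statement (and the paper itself writes Bernstein with ``$\leq$'' rather than ``$\lesssim$'', so a similar cosmetic slack is implicit there), but you should either state the conclusion with the adjusted constant or mimic the bookkeeping of Lemma~\ref{compare1}, which is engineered precisely so that no such factor is lost.
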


\begin{Remark}
In the case of the 3D NSE ($b \equiv 0$), the value $r= \infty$ in both Theorem \ref{thm} and Theorem \ref{thm:compare} gives the best regularity criteria due to Bernstein's inequality.
\end{Remark}

\begin{Remark}
If $u\in C((0,T],L^3)$, then $Q_r \in L^\infty(T/2,T)$ for any $3\leq r \leq \infty$, and every regularity condition in Theorems~\ref{thm}, \ref{thm:compare} is automatically satisfied. However, these regularity conditions 
exclude the $L^\infty(0;T,L^3)$ criterion by Escauriaza, Seregin, and Sverak \cite{ESS}. Notably, recent  Tao's result for the 3D NSE \cite[Theorem 5.1]{Tao} implies that $Q_\infty \in L^\infty(T/2,T)$ provided $u\in L^\infty(0;T,L^3)$.
What is even more remarkable, Tao's result provides a triple exponential control of $\Lambda_\infty$ in terms of $\|u\|_3$, and gives quantitative (triple logarithmic) blow up rate for  $\|u\|_3$.
\end{Remark}

For the 3D NSE, conditions (6), (8), and (3), (7) were obtained by Cheskidov, Shvydkoy in \cite{CS10} and  \cite{CS} respectively.
Regarding the MHD system, condition (7) (and hence (4), (5), and (6)) is weaker than \eqref{PS}, \eqref{CMZ08} for $r<6$, and condition (8) is weaker than \eqref{eq:ContCondition}.
Conditions (1) and (2) are weaker than \eqref{BKM-extended} for both the 3D NSE and MHD system.

The rest of the paper is organized as follows. In Section \ref{sec:pre} we introduce some notations, recall the Littlewood-Paley theory and definitions of weak and regular solutions. Section \ref{sec:reg} is devoted to proving Theorem \ref{thm} and Theorem \ref{thm:compare}.

\bigskip

\section{Preliminaries}
\label{sec:pre}

\subsection{Notation}
\label{sec:notation}
We denote by $A\lesssim B$ an estimate of the form $A\leq C B$ with
some absolute constant $C$, and by $A\sim B$ an estimate of the form $C_1
B\leq A\leq C_2 B$ with some absolute constants $C_1$, $C_2$. 
We write $\|\cdot\|_p=\|\cdot\|_{L^p}$. The symbol $(\cdot, \cdot)$ stands for the $L^2$-inner product.

\subsection{Littlewood-Paley decomposition}
\label{sec:LPD}
Our method  relies on the Littlewood-Paley decomposition, which we briefly recall here. We refer the readers to the books by Bahouri, Chemin and Danchin \cite{BCD} and Grafakos \cite{Gr} for a more detailed description on this theory.

Let $\mathcal F$ and $\mathcal F^{-1}$ denote the Fourier transform and inverse Fourier transform, respectively.  A nonnegative radial function $\chi\in C_0^\infty(\R^n)$ is chosen such that 
\begin{equation}\label{def_of_chi}
\chi(\xi)=
\begin{cases}
1, \ \ \mbox { for } |\xi|\leq\frac{3}{4}\\
0, \ \ \mbox { for } |\xi|\geq 1.
\end{cases}
\end{equation}
Let 
\bg\notag
\varphi(\xi)=\chi(\xi/2)-\chi(\xi),
\ed
and
\begin{equation}\notag
\varphi_q(\xi)=
\begin{cases}
\varphi(\lambda_q^{-1}\xi)  \ \ \ \mbox { for } q\geq 0,\\
\chi(\xi) \ \ \ \mbox { for } q=-1.
\end{cases}
\end{equation}
Note that the sequence of the smooth functions $\varphi_q$ forms a dyadic partition of unity.
For a tempered distribution vector field $u$ we define the Littlewood-Paley decomposition
\begin{equation}\notag
\begin{cases}
&h=\mathcal F^{-1}\varphi, \qquad \tilde h=\mathcal F^{-1}\chi,\\
&u_q:=\Delta_qu=\mathcal F^{-1}(\varphi(\lambda_q^{-1}\xi)\mathcal Fu)=\lambda_q^n\int h(\lambda_qy)u(x-y)dy,  \qquad \mbox { for }  q\geq 0,\\
& u_{-1}=\mathcal F^{-1}(\chi(\xi)\mathcal Fu)=\int \tilde h(y)u(x-y)dy.
\end{cases}
\end{equation}
Then
\bg\notag
u=\sum_{q=-1}^\infty u_q
\ed
holds in the distributional sense.  To simplify the notation, we denote
\bg\notag
\tilde u_q=u_{q-1}+u_q+u_{q+1}, \qquad u_{\leq Q}=\sum_{q=-1}^Qu_q,  \qquad u_{(P,Q]}=\sum_{q=P+1}^Qu_q.
\ed

The Besov space $B_{p,\infty}^{s}$ is defined as follows.
\begin{Definition}
Let $s\in \mathbb R$, and $1\leq p\leq \infty$. Let
$$
\|u\|_{B_{p, \infty}^{s}}=\sup_{q\geq -1}\lambda_q^s\|u_q\|_p.
$$
The Besov space $B_{p,\infty}^{s}$ is the space of tempered distributions $u$ such that  $\|u\|_{B_{p,\infty}^{s}}$ is finite.
\end{Definition}
Note that 
\[
  \|u\|_{H^s} \sim \left(\sum_{q=-1}^\infty\lambda_q^{2s}\|u_q\|_2^2\right)^{1/2},
\]
for each $u \in H^s$ and $s\in\R$.

We recall Bernstein's inequality as follows.
\begin{Lemma}\label{le:bern} \cite{L}
Let $n$ be the space dimension and $r\geq s\geq 1$. Then for all tempered distributions $u$, 
\bg\notag
\|u_q\|_{r}\lesssim \lambda_q^{n(\frac{1}{s}-\frac{1}{r})}\|u_q\|_{s}.
\ed
\end{Lemma}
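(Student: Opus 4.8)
The plan is to realize each Littlewood-Paley block $u_q$ as a convolution against a rescaled Schwartz kernel and then invoke Young's convolution inequality, reading off the claimed gain from the scaling of that kernel. First I would record the frequency localization that is built into the construction of $\Delta_q$: for $q\geq 0$ the multiplier $\varphi(\lambda_q^{-1}\xi)$ is supported in the annulus $\{\tfrac34\lambda_q\leq|\xi|\leq 2\lambda_q\}$, while for $q=-1$ the multiplier $\chi$ is supported in the ball $\{|\xi|\leq 1\}$; in either case the Fourier support of $u_q$ lies in a region of radius comparable to $\lambda_q$.

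Next I would fatten the cutoff. Choosing $\tilde\varphi\in C_0^\infty(\R^n)$ with $\tilde\varphi\equiv 1$ on $\{\tfrac34\leq|\xi|\leq 2\}$ and supported in a slightly larger annulus, one has $\tilde\varphi(\lambda_q^{-1}\xi)\equiv 1$ on the support of $\varphi(\lambda_q^{-1}\xi)$, hence $\mathcal F u_q=\tilde\varphi(\lambda_q^{-1}\xi)\,\mathcal F u_q$. Taking inverse Fourier transforms yields the key identity $u_q=\tilde h_q * u_q$, where $\tilde h_q=\mathcal F^{-1}\!\big(\tilde\varphi(\lambda_q^{-1}\cdot)\big)=\lambda_q^n\,\tilde h(\lambda_q\,\cdot)$ and $\tilde h=\mathcal F^{-1}\tilde\varphi$; the case $q=-1$ is handled identically with a cutoff equal to $1$ on $\{|\xi|\leq 1\}$. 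From here Young's inequality $\|f * g\|_r\leq\|f\|_t\|g\|_s$ with $1+\tfrac1r=\tfrac1t+\tfrac1s$ applied to $f=\tilde h_q$, $g=u_q$ gives $\|u_q\|_r\leq\|\tilde h_q\|_t\|u_q\|_s$, and a change of variables computes $\|\tilde h_q\|_t=\lambda_q^{n(1-1/t)}\|\tilde h\|_t=\lambda_q^{n(\frac1s-\frac1r)}\|\tilde h\|_t$, which is exactly the asserted power of $\lambda_q$.

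The only points requiring attention---and therefore the main obstacle, such as it is---are the admissibility of the Young exponent and the finiteness of the kernel norm. Since $r\geq s\geq 1$ one has $\tfrac1t=1+\tfrac1r-\tfrac1s\in(0,1]$, so $t\in[1,\infty]$ and Young's inequality applies; and because $\tilde\varphi$ is smooth with compact support, its inverse transform $\tilde h$ is Schwartz, so $\|\tilde h\|_t<\infty$ for every such $t$ and can be absorbed into the implicit constant in $\lesssim$. One should also check that the scaling produces the exponent $n(\tfrac1s-\tfrac1r)$ with the correct sign, which follows from $1-\tfrac1t=\tfrac1s-\tfrac1r$; all of this is routine once the convolution representation $u_q=\tilde h_q * u_q$ is established, which is the conceptual heart of the argument.
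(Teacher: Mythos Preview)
Your argument is correct: writing $u_q=\tilde h_q*u_q$ with a rescaled Schwartz kernel and applying Young's inequality is the standard proof of Bernstein's inequality, and your check of the exponent $1-\tfrac1t=\tfrac1s-\tfrac1r$ and of the admissibility of $t$ is sound (with the trivial caveat that $\tfrac1t$ can equal $0$ when $s=1$, $r=\infty$, in which case $t=\infty$ and Young still applies). The paper itself gives no proof of this lemma---it merely states it and cites \cite{L}---so you have supplied more than the paper does.
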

Throughout the paper we will utilize the following sharp version of Bony's paraproduct decomposition 
\begin{equation}\notag
\begin{split}
\Delta_q(u\cdot\nabla v)=&\sum_{|q-p|\leq 2}\Delta_q(u_{\leq{p-2}}\cdot\nabla) v_p+
\sum_{|q-p|\leq 2}\Delta_q(u_{p}\cdot\nabla) v_{\leq{p-2}}\\
&+\sum_{p\geq q-2}\Delta_q(u_p\cdot\nabla)\tilde v_p,
\end{split}
\end{equation}
which holds because we insist on a rather specific choice of the bump function $\varphi$ via \eqref{def_of_chi}:
\[
\varphi(\xi)=
\begin{cases}
1, \ \ \mbox { for } 1\leq|\xi|\leq 3/2\\
0, \ \ \mbox { for } |\xi|\leq 3/4 \ \text{or} \ |\xi|\geq 2,
\end{cases}
\]
see Section 2.3 of \cite{CD-attractor} for more details.

We will also employ the commutator notation
\begin{equation}\notag
[\Delta_q, u_{\leq{p-2}}\cdot\nabla]v_p=\Delta_q(u_{\leq{p-2}}\cdot\nabla) v_p-(u_{\leq{p-2}}\cdot\nabla) \Delta_qv_p,
\end{equation}
which allows to essentially move the derivative from $v_p$ to $u_{\leq p-2}$.

\begin{Lemma}\label{le-comm}
For any $1<r_1<\infty$ and $r_1\leq r_2, r_3\leq\infty$ with $\frac1{r_2}+\frac1{r_3}=\frac1{r_1}$, the commutator satisfies 
\begin{equation}\label{est-comm}
\|[\Delta_q,u_{\leq{p-2}}\cdot\nabla] v_p\|_{r_1}
\lesssim \|v_p\|_{r_3}\sum_{p' \leq p-2} \lambda_{p'} \|u_{p'}\|_{r_2}.
\end{equation}
\end{Lemma}

\pf
We rewrite the commutator by applying the definition of $\Delta_q$, integration by parts, first order Taylor's formula, and a change of variable
\begin{equation}\notag
\begin{split}
&[\Delta_q, u_{\leq{p-2}}\cdot\nabla]v_p\\
=&\ \int_{\mathbb R^3} \lambda_q^3h(\lambda_q(x-y))\left(u_{\leq p-2}(y)-u_{\leq p-2}(x)\right)\cdot \nabla_y v_p(y) dy\\
=&\ -\int_{\mathbb R^3} \lambda_q^3\nabla_yh(\lambda_q(x-y))\left(\left(u_{\leq p-2}(y)-u_{\leq p-2}(x)\right)\otimes v_p(y)\right) dy\\
=&\ -\int_0^1\int_{\mathbb R^3} \lambda_q^3\nabla_yh(\lambda_q(x-y))\left(\left((y-x)\cdot\nabla u_{\leq p-2}(x+\tau (y-x))\right)\otimes v_p(y)\right) dyd\tau\\
=&\ \int_0^1\int_{\mathbb R^3} \lambda_q^3\nabla_zh(\lambda_qz)\left(\left(z\cdot\nabla u_{\leq p-2}(x-\tau z)\right)\otimes v_p(x-z)\right) dzd\tau .\\
\end{split}
\end{equation}
To continue, we apply Minkowski's integral inequality, H\"older's inequality (with respect to $x$ variable), and the translation invariance of Lebesgue norm, and obtain
\begin{equation}\notag
\begin{split}
&\|[\Delta_q,u_{\leq{p-2}}\cdot\nabla] u_q\|_{r_1}\\
\lesssim & \int_0^1\int_{\mathbb R^3} \lambda_q^3|z|\left|\nabla_zh(\lambda_qz)\right| \left\|\nabla u_{\leq p-2}(\cdot-\tau z) v_p(\cdot-z)\right\|_{r_1} dzd\tau\\
\lesssim & \int_0^1\int_{\mathbb R^3} \lambda_q^3|z|\left|\nabla_zh(\lambda_qz)\right| \left\|\nabla u_{\leq p-2}(\cdot-\tau z)\right\|_{r_2} \left\|v_p(\cdot-z)\right\|_{r_3} dzd\tau\\
\lesssim &\|v_p\|_{r_3}\sum_{p' \leq p-2} \lambda_{p'} \|u_{p'}\|_{r_2} \int_0^1\int_{\mathbb R^3} \lambda_q^3|z|\left|\nabla_zh(\lambda_qz)\right| dzd\tau\\
\lesssim &\|v_p\|_{r_3}\sum_{p' \leq p-2} \lambda_{p'} \|u_{p'}\|_{r_2},
\end{split}
\end{equation}
where we used the fact that $\nabla h\in L^1$ in the last step. 

\cbdu

\bigskip

\subsection{Definition of solutions}
\label{sec:sol}
We recall some classical definitions of weak and regular solutions to a differential equation system (see, e.g.,  \cite{DL,ST, Tem}).
\begin{Definition}\label{def:weak} 
A weak solution of (\ref{MHD}) on $[0,T]$ is a pair of divergence free functions $(u, b)$ in the class 
$$
u,b \in C_w([0,T]; L^2(\mathbb R^3)) \cap L^2(0,T; H^1(\mathbb R^3)),
$$
satisfying 
\begin{equation}\notag
\begin{split}
&(u(t), \phi(t))-(u_0, \phi(0))\\
=&\int_0^t(u(s), \partial_s\phi(s))+\nu(u(s), \Delta\phi(s))+ (u(s)\cdot\nabla\phi(s), u(s))-(b(s)\cdot\nabla\phi(s), b(s))\, ds,
\end{split}
\end{equation}
\begin{equation}\notag
\begin{split}
&(b(t), \phi(t))-(b_0, \phi(0))\\
=&\int_0^t(b(s), \partial_s\phi(s))+\mu(b(s), \Delta\phi(s))+ (u(s)\cdot\nabla\phi(s), b(s))-(b(s)\cdot\nabla\phi(s), u(s))\, ds,
\end{split}
\end{equation}
for all test functions $\phi\in C_0^\infty([0,T]\times\mathbb R^3)$ with $\nabla_x\cdot \phi=0$.
\end{Definition}

It is known that space $\dot H^{\frac12}$ is critical for the 3D NSE and $\dot H^{\frac12}\times \dot H^{\frac12}$ is critical for the 3D MHD under the aforementioned natural scaling $u_\lambda(x,t)=\lambda u(\lambda x,\lambda^2 t)$ and $b_\lambda(x,t)=\lambda b(\lambda x,\lambda^2 t)$. The continuity of a subcritical norm on a time interval implies that the solution is smooth on this interval. Thus we adapt the following definition of a regular solution.

\begin{Definition}\label{le-reg} 
A weak solution $(u, b)$ of (\ref{MHD}) is regular on a time interval $\mathcal I$ if $\|u(t)\|_{H^s}$ and $\|b(t)\|_{H^s}$ are continuous on $\mathcal I$ for some $s> \frac{1}{2}$.
\end{Definition}

\begin{Remark} {\color{white}.....}\\
\begin{enumerate}
\item A weak solution $(u, b)$ of (\ref{MHD}) is regular on $(0,T)$ if and only if $(u,b) \in C^{\infty}(\mathbb{R}^3\times (0,T))$.
\item A weak solution $(u, b)$ of (\ref{MHD}) is regular on $(0,T]$ if and only if $(u,b) \in C^{\infty}(\mathbb{R}^3\times (0,T))$ and
\[
\limsup_{t \to T-} (\|u(t)\|_{H^s} + \|b(t)\|_{H^s}) < \infty,
\] 
for some $s>\frac{1}{2}$.
\item If a weak solution $(u, b)$ of (\ref{MHD}) is regular on $(0,T]$, then it can be extended as a smooth solution to a longer interval $(0,T + \varepsilon)$, for some $\varepsilon>0$.
\end{enumerate}
\end{Remark}

\bigskip

\section{Regularity Criterion}
\label{sec:reg}

In this section we will establish the regularity criterion in Theorem \ref{thm}. Let $(u(t), b(t))$ be a weak solution of (\ref{MHD}) on $[0,T]$.
Recall our definition of the dissipation wavenumber $\Lambda_r(t)$ as in (\ref{wave}). We will see that the constant $c_r$ in the definition needs to be chosen sufficiently small in the course of the proof of Theorem \ref{thm}. Recall $Q_r(t)\in\mathbb N$ be such that $\lambda_{Q_r(t)}=\Lambda_r(t)$. It follows immediately that
\[
\|u_p(t)\|_r < \lambda_p^{1-\frac{3}{r}} c_r\min\{\nu,\mu\}, \qquad \forall p>Q_r(t),
\]
and
\bg\label{Q}
\|u_{Q_r(t)}(t)\|_r\geq c_r\min\{\nu,\mu\}\Lambda_r^{1-\frac3r}(t),
\ed
provided $1<\Lambda_r(t)<\infty$. For simplicity, from now on we drop the subscript $r$ in $Q_r$ unless specified otherwise. We also denote
\[
f(t)=\sum_{q \leq Q(t)} \lambda_q \|u_{q}(t)\|_{\infty}.
\]

\subsection{Proof of Theorem \ref{thm}}
In order to prove that $(u,b)$ does not blow up at $T$, it is sufficient to show that $\|u(t)\|_{H^s}+\|b(t)\|_{H^s}$ is  bounded on $[T/2,T)$ for some $s>\frac{1}{2}$.  Before getting into lengthy computations and estimates, we first outline the scheme of the proof:\\
\begin{enumerate}[(i)]
\item Establish the main energy estimate stated in the following proposition. 
\begin{Proposition}\label{prop}
For any $2\leq r<6$ and $\frac12<s<\frac3r$ (any $2\leq r \leq \infty$ and $s>\frac12$ in case $b \equiv 0$), there exist adimensional constants $\tilde c$ and $C$, such that
\begin{equation}\label{energy-m}
\frac{d}{dt}\left(\|u\|_{H^s}^2+\|b_q\|_{H^s}^2\right)
\leq C f(t)\left(\|u_q\|_{H^s}^2+\|b_q\|_{H^s}^2\right),
\end{equation}
provided $c_r \leq \tilde c$.
\end{Proposition}
\item Show that one can weaken the condition $f\in L^1$ to the assumption (\ref{criterion}), which still allows us to apply Gr\"onwall's inequality to (\ref{energy-m}).
\item Complete the claim of Theorem \ref{thm}.\\
\end{enumerate}

\noindent {\bf Step (i):}
This step is devoted to the proof of Proposition \ref{prop}, which consists of heavy energy estimates through standard frequency localization techniques and the wavenumber splitting approach developed in the first author's previous work \cite{CS}. 

Since $(u(t), b(t))$ is regular on $(0,T)$, multiplying equations  (\ref{MHD}) with $\Delta^2_qu$ and $\Delta^2_qb$ respectively yields 
\begin{equation}\notag
\begin{split}
\frac{1}{2}\frac{d}{dt}\|u_q\|_2^2\leq &-\nu\|\nabla u_q\|_2^2-\int_{\R^3}\Delta_q(u\cdot\nabla u)\cdot u_q\, dx\\
&+\int_{\R^3}\Delta_q(b\cdot\nabla b)\cdot u_q\, dx,\\
\frac{1}{2}\frac{d}{dt}\|b_q\|_2^2\leq &-\mu\|\nabla b_q\|_2^2-\int_{\R^3}\Delta_q(u\cdot\nabla b)\cdot b_qdx\\
&+\int_{\R^3}\Delta_q(b\cdot\nabla u)\cdot b_qdx.
\end{split}
\end{equation}
Multiplying the above two inequalities by $\lambda_q^{2s}$, adding the resulted inequalities, and taking summation for all $q\geq -1$, 
we obain
\begin{equation}\label{ineq-ubq}
\begin{split}
\frac{1}{2}\frac{d}{dt}\sum_{q\geq -1}\lambda_q^{2s}\left(\|u_q\|_2^2+\|b_q\|_2^2\right)\leq &-\sum_{q\geq -1}\lambda_q^{2s}\left(\nu\|\nabla u_q\|_2^2+\mu\|\nabla b_q\|_2^2\right)\\
&-(I+J+K+L),
\end{split}
\end{equation}
with
\begin{equation}\notag
\begin{split}
I=&\sum_{q\geq -1}\lambda_q^{2s}\int_{\R^3}\Delta_q(u\cdot\nabla u)\cdot u_q\, dx, \qquad
J=-\sum_{q\geq -1}\lambda_q^{2s}\int_{\R^3}\Delta_q(b\cdot\nabla b)\cdot u_q\, dx,\\
K=&\sum_{q\geq -1}\lambda_q^{2s}\int_{\R^3}\Delta_q(u\cdot\nabla b)\cdot b_q\, dx,\qquad
L=-\sum_{q\geq -1}\lambda_q^{2s}\int_{\R^3}\Delta_q(b\cdot\nabla u)\cdot b_q\, dx.
\end{split}
\end{equation}
In the rest of this step, we will establish estimates for the flux terms $I, J, K,$ and $L$, as respectively stated in Lemma \ref{le-I},  Lemma \ref{le-J}, and Lemma \ref{le-K}. Proposition \ref{prop} is then an immediate consequence of (\ref{ineq-ubq}) and the estimates in Lemmas  \ref{le-I}-\ref{le-K}.

In order to estimate the flux terms $I, J, K,$ and $L$, we will employ extensively standard harmonic analysis techniques, Bony's paraproduct and commutator estimates. The key ingredient is that we split each flux term into high frequency and low frequency parts via the dissipation wavenumber $\Lambda_r(t)$, such that the high frequency part can be absorbed by the diffusion. 

\begin{Lemma}\label{le-I}
The flux term $I$ satisfies, for any $s>0$ and $r\geq 2$,
\begin{equation}\label{est-i}
|I|\lesssim c_r\nu\sum_{q> Q-3}\lambda_q^{2s+2}\|u_q\|_2^2+f(t)\sum_{q\geq -1}\lambda_q^{2s}\|u_q\|_2^2.
\end{equation}
\end{Lemma}
\pf
We first recall Bony's paraproduct decomposition 
\begin{equation}\notag
\begin{split}
\Delta_q(u\cdot\nabla u)=&\sum_{|q-p|\leq 2}\Delta_q(u_{\leq{p-2}}\cdot\nabla u_p)+
\sum_{|q-p|\leq 2}\Delta_q(u_{p}\cdot\nabla u_{\leq{p-2}})\\
&+\sum_{p\geq q-2}\Delta_q(u_p\cdot\nabla\tilde u_p).
\end{split}
\end{equation}
Thus, applying Bony's paraproduct above, the flux term $I$ can be decomposed as 
\begin{equation}\label{eq-i}
\begin{split}
I=
&\sum_{q\geq -1}\sum_{\substack{|q-p|\leq 2 \\ p\geq-1}}\lambda_q^{2s}\int_{\R^3}\Delta_q(u_{\leq p-2}\cdot\nabla u_{p})\cdot u_q\, dx\\
&+\sum_{q\geq -1}\sum_{\substack{|q-p|\leq 2 \\ p\geq-1}}\lambda_q^{2s}\int_{\R^3}\Delta_q(u_{p}\cdot\nabla u_{\leq{p-2}})\cdot u_q\, dx\\
&+\sum_{q\geq -1}\sum_{\substack{p\geq q-2 \\ p\geq-1}}\lambda_q^{2s}\int_{\R^3}\Delta_q(u_p\cdot\nabla\tilde u_p)\cdot u_q\, dx\\
=&I_{1}+I_{2}+I_{3},
\end{split}
\end{equation}
with 
\begin{equation}\label{eq-i1}
\begin{split}
I_{1}=&\sum_{q\geq -1}\sum_{\substack{|q-p|\leq 2 \\ p\geq-1}}\lambda_q^{2s}\int_{\R^3}[\Delta_q, u_{\leq{p-2}}\cdot\nabla] u_p\cdot u_q\, dx\\
&+\sum_{q\geq -1}\lambda_q^{2s}\int_{\R^3}(u_{\leq{q-2}}\cdot\nabla)  u_q \cdot u_q\, dx\\
&+\sum_{q\geq -1}\sum_{\substack{|q-p|\leq 2 \\ p\geq-1}}\lambda_q^{2s}\int_{\R^3}((u_{\leq{p-2}}-u_{\leq{q-2}})\cdot\nabla)\Delta_qu_p \cdot u_q\, dx\\
=&I_{11}+I_{12}+I_{13},
\end{split}
\end{equation}
where we used $\sum_{|q-p|\leq 2}\Delta_q u_p=u_q$.
One can see that $I_{12}$ vanishes since $\div\, u_{\leq q-2}=0$.
We now estimate $I_{11}$, where 
we used the commutator notation
\begin{equation}\notag
[\Delta_q, u_{\leq{p-2}}\cdot\nabla]u_p=\Delta_q(u_{\leq{p-2}}\cdot\nabla u_p)-(u_{\leq{p-2}}\cdot\nabla) \Delta_qu_p.
\end{equation}
Further splitting $I_{11}$ based on definition of $\Lambda_r$, we get
\begin{equation}\notag
 \begin{split}
|I_{11}|\leq &\sum_{q\geq -1}\sum_{\substack{|q-p|\leq 2 \\ p\geq-1}}\lambda_q^{2s}\int_{\R^3}\left|[\Delta_q, u_{\leq{p-2}}\cdot\nabla] u_p\cdot u_q\right|\, dx\\
\leq & \sum_{-1\leq p\leq Q+2}\sum_{\substack{|q-p|\leq 2 \\ q\geq-1}}\lambda_q^{2s}\int_{\R^3}\left|[\Delta_q, u_{\leq{p-2}}\cdot\nabla] u_p\cdot u_q\right|\, dx\\
&+\sum_{p> Q+2}\sum_{\substack{|q-p|\leq 2 \\ q\geq-1}}\lambda_q^{2s}\int_{\R^3}\left|[\Delta_q, u_{\leq Q}\cdot\nabla] u_p\cdot u_q\right|\, dx\\
&+\sum_{p> Q+2}\sum_{\substack{|q-p|\leq 2 \\ q\geq-1}}\lambda_q^{2s}\int_{\R^3}\left|[\Delta_q, u_{(Q,p-2]}\cdot\nabla] u_p\cdot u_q\right|\, dx\\
\equiv &I_{111}+I_{112}+I_{113}.
\end{split}
\end{equation}
Using the commutator estimate (\ref{est-comm}), H\"older's inequality, and  definition of $f(t)$, we obtain
\begin{equation}\notag
\begin{split}
I_{111}\lesssim &\sum_{-1\leq p\leq Q+2}\sum_{\substack{|q-p|\leq 2 \\ q\geq-1}}\lambda_q^{2s}\|\nabla u_{\leq p-2}\|_\infty\|u_p\|_2\|u_q\|_2\\
\lesssim & f(t) \sum_{-1\leq p\leq Q+2}\|u_p\|_2\sum_{\substack{|q-p|\leq 2 \\ q\geq-1}}\lambda_q^{2s}\|u_q\|_2\\
\lesssim & f(t) \sum_{q\geq -1}\lambda_q^{2s}\|u_q\|_2^2;
\end{split} 
\end{equation}
and similarly
\begin{equation}\notag
\begin{split}
I_{112}\lesssim &\sum_{ p> Q+2}\sum_{\substack{|q-p|\leq 2 \\ q\geq-1}}\lambda_q^{2s}\|\nabla u_{\leq Q}\|_\infty\|u_p\|_2\|u_q\|_2\\
\lesssim & f(t) \sum_{ p> Q+2}\|u_p\|_2\sum_{\substack{|q-p|\leq 2 \\ q\geq-1}}\lambda_q^{2s}\|u_q\|_2\\
\lesssim & f(t) \sum_{q> Q}\lambda_q^{2s}\|u_q\|_2^2.
\end{split}
\end{equation}
Now using H\"older's inequality, (\ref{est-comm}), definition of $\Lambda_r$, Bernstein's inequality, and Jensen's inequality, we deduce 
\begin{equation}\notag
\begin{split}
I_{113}
\lesssim &\sum_{ p> Q+2}\sum_{\substack{|q-p|\leq 2 \\ q\geq-1}}\lambda_q^{2s}\|[\Delta_q,u_{(Q,p-2]}\cdot\nabla]u_p\|_2\|u_q\|_2\\
\lesssim &\sum_{ p> Q+2}\sum_{\substack{|q-p|\leq 2 \\ q\geq-1}}\lambda_q^{2s}\|u_q\|_2\sum_{Q<p'\leq p-2}\lambda_{p'}\|u_{p'}\|_r\|u_p\|_{\frac{2r}{r-2}}\\
\lesssim & \sum_{ p> Q+2}\lambda_p^{\frac3r}\|u_p\|_2\sum_{\substack{|q-p|\leq 2 \\ q\geq-1}}\lambda_q^{2s}\|u_q\|_2\sum_{Q<p'\leq p-2}\lambda_{p'}\|u_{p'}\|_r\\
\lesssim & c_r\nu \sum_{ p> Q+2}\lambda_p^{\frac3r}\|u_p\|_2\sum_{\substack{|q-p|\leq 2 \\ q\geq-1}}\lambda_q^{2s}\|u_q\|_2\sum_{Q<p'\leq p-2}\lambda_{p'}^{2-\frac3r}\\
\lesssim & c_r\nu \sum_{ p> Q+2}\lambda_p^{2s+\frac3r}\|u_p\|_2^2\sum_{Q<p'\leq p-2}\lambda_{p'}^{2-\frac3r}\\
\lesssim & c_r\nu \sum_{ p> Q+2}\lambda_p^{2s+2}\|u_p\|_2^2\sum_{Q<p'\leq p-2}\lambda_{p'-p}^{2-\frac3r}\\
\lesssim & c_r\nu \sum_{ p> Q+2}\lambda_p^{2s+2}\|u_p\|_2^2,
\end{split}
\end{equation}
since $r\geq 2$. Combining the previous four inequalities lead to 

\begin{equation}\label{est-i11}
|I_{11}| \lesssim c_r\nu \sum_{ p> Q+2}\lambda_p^{2s+2}\|u_p\|_2^2+f(t) \sum_{q\geq -1}\lambda_q^{2s}\|u_q\|_2^2.
\end{equation}

To estimate $I_{13}$, we first split it as
\begin{equation}\notag
\begin{split}
|I_{13}|\leq & \sum_{q\geq -1}\sum_{\substack{|q-p|\leq 2 \\ p\geq-1}}\lambda_q^{2s}\left|\int_{\R^3}((u_{\leq{p-2}}-u_{\leq{q-2}})\cdot\nabla)\Delta_qu_p \cdot u_q\, dx\right|\\
= & \sum_{-1\leq q\leq Q}\sum_{\substack{|q-p|\leq 2 \\ p\geq-1}}\lambda_q^{2s}\left|\int_{\R^3}((u_{\leq{p-2}}-u_{\leq{q-2}})\cdot\nabla)\Delta_qu_p\cdot  u_q\, dx\right|\\
 & +\sum_{q>Q}\sum_{\substack{|q-p|\leq 2 \\ p\geq-1}}\lambda_q^{2s}\left|\int_{\R^3}((u_{\leq{p-2}}-u_{\leq{q-2}})\cdot\nabla)\Delta_qu_p \cdot u_q\, dx\right|\\
 \equiv & I_{131}+I_{132}.
 \end{split}
 \end{equation}
 Using H\"older's inequality, integration by parts and the fact $\nabla\cdot (u_{\leq{p-2}}-u_{\leq{q-2}})=0$, and definition of $f(t)$,  we obtain
 \begin{equation}\notag
 \begin{split}
 |I_{131}|\lesssim & \sum_{-1\leq q\leq Q}\lambda_q^{2s+1} \|u_q\|_{\infty}\sum_{\substack{|q-p|\leq 2 \\ p\geq-1}}\|u_{\leq{p-2}}-u_{\leq{q-2}}\|_2\|u_p\|_2\\
 \lesssim & f(t)\sum_{-1\leq q\leq Q}\lambda_q^{2s} \sum_{\substack{|q-p|\leq 2 \\ p\geq-1}}\|u_{\leq{p-2}}-u_{\leq{q-2}}\|_2\|u_p\|_2\\
\lesssim & f(t)\sum_{-1\leq q\leq Q}\lambda_q^{2s} \|u_q\|_2^2.
\end{split}
\end{equation}
Also, H\"older's inequality and  definition of $\Lambda_r$ yield
 \begin{equation}\notag
 \begin{split}
 |I_{132}|\lesssim  & \sum_{q> Q}\lambda_q^{2s+1} \|u_q\|_r\sum_{\substack{|q-p|\leq 2 \\ p\geq-1}}\|u_{\leq{p-2}}-u_{\leq{q-2}}\|_2\|u_p\|_{\frac{2r}{r-2}}\\
\lesssim 
 & c_r\nu \sum_{q> Q}\lambda_q^{2s+2-\frac3r}\sum_{\substack{|q-p|\leq 2 \\ p\geq-1}}\lambda_p^{\frac3r}\|u_{\leq{p-2}}-u_{\leq{q-2}}\|_2\|u_p\|_2\\
\lesssim & c_r\nu \sum_{q> Q}\lambda_q^{2s+2-\frac3r}\sum_{q-3\leq p\leq q+2}\lambda_p^{\frac3r}\|u_p\|_2^2\\
\lesssim & c_r\nu \sum_{q> Q-3}\lambda_q^{2s+2}\|u_q\|_2^2.  
\end{split}
\end{equation}
The last three inequalities together imply 
\begin{equation}\label{est-i13}
|I_{13}|\lesssim c_r\nu \sum_{q> Q-3}\lambda_q^{2s+2}\|u_q\|_2^2+f(t)\sum_{-1\leq q\leq Q}\lambda_q^{2s} \|u_q\|_2^2.
\end{equation}
Therefore, we conclude from (\ref{eq-i1}), (\ref{est-i11}),  (\ref{est-i13}) and the fact $I_{12}=0$ that
\begin{equation}\label{est-i1}
|I_{1}|\lesssim c_r\nu \sum_{q> Q-3}\lambda_q^{2s+2}\|u_q\|_2^2+f(t)\sum_{ q\geq-1}\lambda_q^{2s} \|u_q\|_2^2.
\end{equation}

Regarding $I_2$ in (\ref{eq-i}), we claim that it enjoys the same estimate as $I_{11}$ in (\ref{eq-i1}). Indeed, the commutator estimate (\ref{est-comm}) indicates that the commutator $[\Delta_q, u_{\leq{p-2}}\cdot\nabla]u_p$ appeared in $I_{11}$ has the effect of moving a derivative from $u_p$ to $u_{\leq{p-2}}$. Hence, $I_{11}$ has the same essential structure as $I_2$. Therefore, by (\ref{est-i11}), we have 
\begin{equation}\label{est-i2}
|I_2| \lesssim c_r\nu \sum_{ p> Q+2}\lambda_p^{2s+2}\|u_p\|_2^2+f(t) \sum_{q\geq -1}\lambda_q^{2s}\|u_q\|_2^2 ,\ \mbox {for } \ \ r\geq 2.
\end{equation}

In the end, we deal with $I_{3}$ by using H\"older's inequality, definition of $\Lambda_r$ and $f(t)$, Bernstein's inequality, and change order of the summations, to obtain
\begin{equation}\label{est-i3}
\begin{split}
|I_{3}|\lesssim &\sum_{q\geq -1}\sum_{p\geq q-2}\lambda_q^{2s}\int_{\mathbb R^3}|\Delta_q(u_p\otimes\tilde u_p)\nabla u_q|\, dx\\
\lesssim &\sum_{q> Q}\lambda_q^{2s+1}\|u_q\|_\infty\sum_{p\geq q-3}\|u_p\|_2^2
+\sum_{-1\leq q\leq Q}\lambda_q^{2s+1}\|u_q\|_\infty\sum_{p\geq q-3}\|u_p\|_2^2\\
\lesssim &\sum_{q> Q}\lambda_q^{2s+1+\frac3r}\|u_q\|_r\sum_{p\geq q-3}\|u_p\|_2^2
+f(t)\sum_{-1\leq q\leq Q}\lambda_q^{2s}\sum_{p\geq q-3}\|u_p\|_2^2\\
\lesssim &c_r\nu\sum_{p> Q}\lambda_p^{2s+2}\|u_p\|_2^2\sum_{Q< q\leq p+3}\lambda_{q-p}^{2s+2}
+f(t)\sum_{p\geq -1}\lambda_p^{2s}\|u_p\|_2^2\sum_{q\leq p+3}\lambda_{q-p}^{2s}\\
\lesssim &c_r\nu\sum_{q> Q}\lambda_q^{2s+2}\|u_q\|_2^2+f(t)\sum_{q\geq -1}\lambda_q^{2s}\|u_q\|_2^2.
\end{split}
\end{equation}
Finally, the estimate (\ref{est-i}) is a consequence of (\ref{eq-i}), (\ref{est-i1}), (\ref{est-i2}), and (\ref{est-i3}).

\cbdu

The other flux terms $J, K$ and $L$ will be estimated in a similar fashion. However, to fully exploit the cancellations among the flux, we estimate $J+L$ instead of handling $J$ and $L$ separately.  

\begin{Lemma}\label{le-J} For any $r$ and $s$ satisfying $\frac12<s<1$ and $2\leq r<\frac 3s$, we have
\begin{equation}\label{est-jl}
|J+L|\lesssim c_r\mu\sum_{q\geq -1}\lambda_q^{2s+2}\|b_q\|_2^2+f(t)\sum_{q\geq -1}\lambda_q^{2s}\|b_q\|_2^2.
\end{equation}
\end{Lemma}
\pf
Similarly, using Bony's paraproduct decomposition and the commutator notation, $J$ can be decomposed as
\begin{equation}\label{eq-j}
\begin{split}
J=
&-\sum_{q\geq -1}\sum_{\substack{|q-p|\leq 2 \\ p\geq-1}}\lambda_q^{2s}\int_{\R^3}\Delta_q(b_{\leq p-2}\cdot\nabla b_p)\cdot u_q\, dx\\
&-\sum_{q\geq -1}\sum_{\substack{|q-p|\leq 2 \\ p\geq-1}}\lambda_q^{2s}\int_{\R^3}\Delta_q(b_{p}\cdot\nabla b_{\leq{p-2}})\cdot u_q\, dx\\
&-\sum_{q\geq -1}\sum_{\substack{p\geq q-2 \\ p\geq-1}}\lambda_q^{2s}\int_{\R^3}\Delta_q(b_p\cdot\nabla\tilde b_p)\cdot u_q\, dx\\
=&J_{1}+J_{2}+J_{3},
\end{split}
\end{equation}
with 
\begin{equation}\label{eq-j1}
\begin{split}
J_{1}=&-\sum_{q\geq -1}\sum_{\substack{|q-p|\leq 2 \\ p\geq-1}}\lambda_q^{2s}\int_{\R^3}[\Delta_q, b_{\leq{p-2}}\cdot\nabla] b_p\cdot u_q\, dx\\
&-\sum_{q\geq -1}\sum_{\substack{|q-p|\leq 2 \\ p\geq-1}}\lambda_q^{2s}\int_{\R^3}(b_{\leq{q-2}}\cdot\nabla) \Delta_q b_p \cdot u_q\, dx\\
&-\sum_{q\geq -1}\sum_{\substack{|q-p|\leq 2 \\ p\geq-1}}\lambda_q^{2s}\int_{\R^3}((b_{\leq{p-2}}-b_{\leq{q-2}})\cdot\nabla)\Delta_qb_p \cdot u_q\, dx\\
=&J_{11}+J_{12}+J_{13};
\end{split}
\end{equation}
while the flux $L$ can be decomposed as
\begin{equation}\label{eq-l}
\begin{split}
L=
&-\sum_{q\geq -1}\sum_{\substack{|q-p|\leq 2 \\ p\geq-1}}\lambda_q^{2s}\int_{\R^3}\Delta_q(b_{\leq p-2}\cdot\nabla u_p)\cdot b_q\, dx\\
&-\sum_{q\geq -1}\sum_{\substack{|q-p|\leq 2 \\ p\geq-1}}\lambda_q^{2s}\int_{\R^3}\Delta_q(b_{p}\cdot\nabla u_{\leq{p-2}})\cdot b_q\, dx\\
&-\sum_{q\geq -1}\sum_{\substack{p\geq q-2 \\ p\geq-1}}\lambda_q^{2s}\int_{\R^3}\Delta_q(\tilde b_p\cdot\nabla u_p)\cdot b_q\, dx\\
=&L_{1}+L_{2}+L_{3},
\end{split}
\end{equation}
with 
\begin{equation}\label{eq-l1}
\begin{split}
L_{1}=&-\sum_{q\geq -1}\sum_{\substack{|q-p|\leq 2 \\ p\geq-1}}\lambda_q^{2s}\int_{\R^3}[\Delta_q,b_{\leq{p-2}}\cdot\nabla] u_p\cdot b_q\, dx\\
&-\sum_{q\geq -1}\sum_{\substack{|q-p|\leq 2 \\ p\geq-1}}\lambda_q^{2s}\int_{\R^3}(b_{\leq{q-2}}\cdot\nabla)\Delta_q u_p \cdot b_q\, dx\\
&-\sum_{q\geq -1}\sum_{\substack{|q-p|\leq 2 \\ p\geq-1}}\lambda_q^{2s}\int_{\R^3}\left((b_{\leq{p-2}}-b_{\leq{q-2}})\cdot\nabla\right)\Delta_qu_p\cdot  b_q\, dx\\
=&L_{11}+L_{12}+L_{13}.
\end{split}
\end{equation}
We shall show that the term $J_{12}$ cancels $L_{12}$. Indeed, using the fact $\sum_{|q-p|\leq 2}\Delta_pb_q=b_q$ and integration by parts, we notice 
\[\sum_{\substack{|q-p|\leq 2 \\ p\geq-1}}\lambda_q^{2s}\int_{\R^3}(b_{\leq{q-2}}\cdot\nabla)\Delta_q b_p \cdot b_q\, dx
=\lambda_q^{2s}\int_{\R^3}(b_{\leq{q-2}}\cdot\nabla) b_q \cdot b_q\, dx=0;\]
and analogously, 
\[\sum_{\substack{|q-p|\leq 2 \\ p\geq-1}}\lambda_q^{2s}\int_{\R^3}(b_{\leq{q-2}}\cdot\nabla)\Delta_q u_p \cdot u_q\, dx
=\lambda_q^{2s}\int_{\R^3}(b_{\leq{q-2}}\cdot\nabla) u_q \cdot u_q\, dx=0.\]
Therefore, we deduce
\begin{equation}\label{eq-j12l12}
\begin{split}
J_{12}+L_{12}=&-\sum_{q\geq -1}\sum_{\substack{|q-p|\leq 2 \\ p\geq-1}}\lambda_q^{2s}\int_{\R^3}(b_{\leq{q-2}}\cdot\nabla)\Delta_q b_p \cdot (u_q+b_q)\, dx\\
&-\sum_{q\geq -1}\sum_{\substack{|q-p|\leq 2 \\ p\geq-1}}\lambda_q^{2s}\int_{\R^3}(b_{\leq{q-2}}\cdot\nabla)\Delta_q u_p \cdot (b_q+u_q)\, dx\\
=&-\sum_{q\geq -1}\sum_{\substack{|q-p|\leq 2 \\ p\geq-1}}\lambda_q^{2s}\int_{\R^3}(b_{\leq{q-2}}\cdot\nabla)(u_p+b_p) \cdot (u_q+b_q)\, dx\\
=&0.
\end{split}
\end{equation}

The other terms appeared in (\ref{eq-j})-(\ref{eq-l1}) are estimated as follows.
First, we further split $J_{11}$ as 
\begin{equation}\notag
\begin{split}
|J_{11}|\leq & \sum_{q>Q}\sum_{\substack{|q-p|\leq 2 \\ p\geq-1}}\lambda_q^{2s}\int_{\R^3}\left|[\Delta_q, b_{\leq{p-2}}\cdot\nabla] b_p\cdot u_q\right|\, dx\\
&+\sum_{-1\leq q\leq Q}\sum_{\substack{|q-p|\leq 2 \\ p\geq-1}}\lambda_q^{2s}\int_{\R^3}\left|[\Delta_q, b_{\leq{p-2}}\cdot\nabla] b_p\cdot u_q\right|\, dx\\
\equiv & J_{111}+J_{112}.
\end{split}
\end{equation}
Then, by H\"older's inequality, (\ref{est-comm}), definition of $\Lambda_r$, and Jensen's inequality, it follows that  
\begin{equation}\notag
\begin{split}
|J_{111}|
\lesssim&\sum_{q> Q}\lambda_q^{2s}\|u_q\|_r\sum_{\substack{|q-p|\leq 2 \\ p\geq-1}}\|b_p\|_2\sum_{p'\leq p-2}\lambda_{p'}\|b_{p'}\|_{\frac{2r}{r-2}}\\
\lesssim&c_r\mu \sum_{q> Q}\lambda_q^{2s+1-\frac3r}\sum_{\substack{|q-p|\leq 2 \\ p\geq-1}}\|b_p\|_2\sum_{p'\leq q}\lambda_{p'}^{1+\frac3r}\|b_{p'}\|_2\\
\lesssim&c_r\mu \sum_{q> Q-2}\lambda_q^{2s+1-\frac3r}\|b_q\|_2\sum_{p'\leq q}\lambda_{p'}^{1+\frac3r}\|b_{p'}\|_2\\
\lesssim&c_r\mu\sum_{q> Q-2}\lambda_q^{s+1}\|b_q\|_2\sum_{p'\leq q}\lambda_{p'}^{s+1}\|b_{p'}\|_2\lambda_{q-p'}^{s-\frac 3r}\\
\lesssim&c_r\mu\sum_{q\geq -1}\lambda_q^{2s+2}\|b_q\|_2^2,
\end{split}
\end{equation}
where we needed  $2\leq r<\frac 3s$.
Also, by H\"older's inequality, definition of $f(t)$ and Jensen's inequality, 
\begin{equation}\notag
\begin{split}
|J_{112}|
\lesssim &\sum_{-1\leq q\leq Q}\lambda_q^{2s}\|u_q\|_\infty\sum_{\substack{|q-p|\leq 2 \\ p\geq-1}}\|b_p\|_2\sum_{p'\leq p-2}\lambda_{p'}\|b_{p'}\|_2\\
\lesssim &f(t)\sum_{-1\leq q\leq Q}\lambda_q^{2s-1}\sum_{\substack{|q-p|\leq 2 \\ p\geq-1}}\|b_p\|_2\sum_{p'\leq q}\lambda_{p'}\|b_{p'}\|_2\\
\lesssim&f(t)\sum_{-1\leq q\leq Q+2}\lambda_q^{2s-1}\|b_q\|_2\sum_{p'\leq q}\lambda_{p'}\|b_{p'}\|_2\\
\lesssim&f(t)\sum_{-1\leq q\leq Q+2}\lambda_q^{s}\|b_q\|_2\sum_{p'\leq q}\lambda_{p'}^s\|b_{p'}\|_2\lambda_{p'-q}^{1-s}\\
\lesssim&f(t)\sum_{-1\leq q\leq Q+2}\lambda_q^{2s}\|b_q\|_2^2,
\end{split}
\end{equation}
where we used $\frac 12< s<1$. The last three inequalities together imply that for $\frac 12< s<1$ and $2\leq r<\frac 3s$,
\begin{equation}\label{est-j11}
|J_{11}|\lesssim c_r\mu\sum_{q\geq -1}\lambda_q^{2s+2}\|b_q\|_2^2+ f(t) \sum_{-1\leq q\leq Q+2}\lambda_q^{2s}\|b_q\|_2^2.
\end{equation}
Similar analysis of employing wavenumber splitting, H\"older's inequality, Bernstein's inequality, definition of $\Lambda_r$ and $f(t)$ yields, for $r\geq 2$ and $s\geq 0$
\begin{equation}\label{est-j13}
\begin{split}
|J_{13}|\leq & \sum_{q\geq -1}\sum_{\substack{|q-p|\leq 2 \\ p\geq-1}}\lambda_q^{2s}\int_{\R^3}|(b_{\leq{p-2}}-b_{\leq{q-2}})\cdot\nabla\Delta_qb_p u_q|\, dx\\
\leq & \sum_{q>Q}\sum_{\substack{|q-p|\leq 2 \\ p\geq-1}}\lambda_q^{2s+1}\|u_q\|_r\|b_{\leq{p-2}}-b_{\leq{q-2}}\|_2\|b_p\|_{\frac{2r}{r-2}}\\
& +\sum_{-1\leq q\leq Q}\sum_{\substack{|q-p|\leq 2 \\ p\geq-1}}\lambda_q^{2s+1}\|u_q\|_\infty\|b_{\leq{p-2}}-b_{\leq{q-2}}\|_2\|b_p\|_2\\
\lesssim & c_r\mu \sum_{q>Q}\lambda_q^{2s+2-\frac3r}\sum_{\substack{|q-p|\leq 2 \\ p\geq-1}}\lambda_p^{\frac3r}\|b_{\leq{p-2}}-b_{\leq{q-2}}\|_2\|b_p\|_2\\
& +f(t)\sum_{-1\leq q\leq Q}\lambda_q^{2s}\sum_{\substack{|q-p|\leq 2 \\ p\geq-1}}\|b_{\leq{p-2}}-b_{\leq{q-2}}\|_2\|b_p\|_2\\
\lesssim & c_r\mu \sum_{q>Q}\lambda_q^{2s+2-\frac3r}\sum_{q-3\leq p\leq q+2}\lambda_p^{\frac3r}\|b_p\|_2^2 +f(t)\sum_{-1\leq q\leq Q+2}\lambda_q^{2s}\|b_q\|_2^2\\
\lesssim & c_r\mu \sum_{q>Q-3}\lambda_q^{2s+2}\|b_p\|_2^2 +f(t)\sum_{-1\leq q\leq Q+2}\lambda_q^{2s}\|b_q\|_2^2.
\end{split}
\end{equation}
Combining (\ref{eq-j1}), (\ref{est-j11}) and (\ref{est-j13}), we obtain
\begin{equation}\label{est-j1}
|J_1-J_{12}|\lesssim c_r\mu\sum_{q\geq -1}\lambda_q^{2s+2}\|b_q\|_2^2+ f(t) \sum_{-1\leq q\leq Q+2}\lambda_q^{2s}\|b_q\|_2^2.
\end{equation} 
Notice that $J_{2}$ enjoys the same estimate as $J_{11}$,  due to the similar reason that $I_{2}$ enjoys the same estimate as $I_{11}$. Thus, thanks to the estimate (\ref{est-j11}), we have for $\frac 12< s<1$ and $2\leq r<\frac 3s$,
\begin{equation}\label{est-j2}
|J_2|\lesssim c_r\mu\sum_{q\geq -1}\lambda_q^{2s+2}\|b_q\|_2^2+ f(t) \sum_{-1\leq q\leq Q+2}\lambda_q^{2s}\|b_q\|_2^2.
\end{equation} 
Applying H\"older's inequality, definition of $\Lambda_r$ and $f(t)$, Bernstein's inequality, and change order of the summations, the last term $J_{3}$ of flux $J$ has the estimate
\begin{equation}\label{est-j3}
\begin{split}
|J_{3}|
\lesssim &\sum_{q\geq -1}\sum_{p\geq q-2}\lambda_q^{2s}\int_{\mathbb R^3}|\Delta_q(b_p\otimes\tilde b_p)\nabla u_q|\, dx\\
\lesssim &\sum_{q> Q}\lambda_q^{2s+1}\|u_q\|_\infty\sum_{p\geq q-3}\|b_p\|_2^2
+\sum_{-1\leq q\leq Q}\lambda_q^{2s+1}\|u_q\|_\infty\sum_{p\geq q-3}\|b_p\|_2^2\\
\lesssim &\sum_{q> Q}\lambda_q^{2s+1+\frac3r}\|u_q\|_r\sum_{p\geq q-3}\|b_p\|_2^2
+f(t)\sum_{-1\leq q\leq Q}\lambda_q^{2s}\sum_{p\geq q-3}\|b_p\|_2^2\\
\lesssim &c_r\mu\sum_{p> Q}\lambda_p^{2s+2}\|b_p\|_2^2\sum_{Q< q\leq p+3}\lambda_{q-p}^{2s+2}\\
&+f(t)\sum_{p\geq -1}\lambda_p^{2s}\|b_p\|_2^2\sum_{-1\leq q\leq p+3}\lambda_{q-p}^{2s}\\
\lesssim &c_r\mu\sum_{q> Q}\lambda_q^{2s+2}\|b_q\|_2^2+f(t)\sum_{q\geq -1}\lambda_q^{2s}\|b_q\|_2^2.
\end{split}
\end{equation}
Therefore, the equation (\ref{eq-j}) together with estimates (\ref{est-j1}), (\ref{est-j2}) and (\ref{est-j3}) leads to
\begin{equation}\label{est-j}
|J-J_{12}|\lesssim c_r\mu\sum_{q\geq -1}\lambda_q^{2s+2}\|b_q\|_2^2+f(t)\sum_{q\geq -1}\lambda_q^{2s}\|b_q\|_2^2.
\end{equation}

Now we turn to estimates of terms in $L$ appeared in (\ref{eq-l}) and (\ref{eq-l1}).
Notice that $L_{11}$ can be estimated as $J_{11}$. 
Thus, it follows from (\ref{est-j11}) that
\begin{equation}\label{est-l11}
|L_{11}|\lesssim c_r\mu\sum_{q\geq -1}\lambda_q^{2s+2}\|b_q\|_2^2+f(t)\sum_{-1\leq q\leq Q+2}\lambda_q^{2s}\|b_q\|_2^2.
\end{equation}
After using integration by parts, the term $L_{13}$ can be estimated similarly to $J_{13}$. Hence, we have from (\ref{est-j13})
\begin{equation}\label{est-l13}
|L_{13}|\lesssim c_r\mu\sum_{q> Q-3}\lambda_q^{2s+2}\|b_q\|_2^2+f(t)\sum_{-1\leq q\leq Q+2}\lambda_q^{2s}\|b_q\|_2^2.
\end{equation} 
To estimate $L_{2}$, we split the summation first as
\begin{equation}\notag
\begin{split}
|L_{2}|\leq&\sum_{-1\leq p\leq Q+2}\sum_{\substack{|q-p|\leq 2 \\ p\geq-1}}\lambda_q^{2s}\int_{\mathbb R^3}\left|\Delta_q(b_p\cdot\nabla u_{\leq p-2})\cdot b_q\right|\,dx\\
&+\sum_{p>Q+2}\sum_{\substack{|q-p|\leq 2 \\ p\geq-1}}\lambda_q^{2s}\int_{\mathbb R^3}\left|\Delta_q(b_p\cdot\nabla u_{\leq Q})\cdot b_q\right|\,dx\\
&+\sum_{p>Q+2}\sum_{\substack{|q-p|\leq 2 \\ p\geq-1}}\lambda_q^{2s}\int_{\mathbb R^3}\left|\Delta_q(b_p\cdot\nabla u_{(Q,p-2]})\cdot b_q\right|\,dx\\
\equiv &\ L_{21}+L_{22}+L_{23}.
\end{split}
\end{equation}
Then using H\"older's inequality and  definition of $f(t)$ we arrive at
\begin{equation}\notag
\begin{split}
L_{21}\lesssim & \sum_{-1\leq p\leq Q+2}\|\nabla u_{\leq p-2}\|_\infty\|b_p\|_2\sum_{\substack{|q-p|\leq 2 \\ p\geq-1}}\lambda_q^{2s}\|b_q\|_2\\
\lesssim & f(t)\sum_{-1\leq p\leq Q+2}\|b_p\|_2\sum_{\substack{|q-p|\leq 2 \\ p\geq-1}}\lambda_q^{2s}\|b_q\|_2\\
\lesssim & f(t)\sum_{q\geq -1}\lambda_q^{2s}\|b_q\|_2^2,
\end{split}
\end{equation}
and
\begin{equation}\notag
\begin{split}
L_{22} \lesssim &\sum_{p> Q+2}\|\nabla u_{\leq Q}\|_\infty\|b_p\|_2\sum_{\substack{|q-p|\leq 2 \\ p\geq-1}}\lambda_q^{2s}\|b_q\|_2\\
\lesssim & f(t)\sum_{p> Q+2}\|b_p\|_2\sum_{\substack{|q-p|\leq 2 \\ p\geq-1}}\lambda_q^{2s}\|b_q\|_2\\
\lesssim & f(t)\sum_{q> Q}\lambda_q^{2s}\|b_q\|_2^2.
\end{split}
\end{equation}
Using H\"older's inequality, definition of $\Lambda_r$ and Jensen's inequality, we deduce
\begin{equation}\notag
\begin{split}
L_{23} \lesssim &\sum_{p> Q+2}\|\nabla u_{(Q,p-2]}\|_r\|b_p\|_{\frac{2r}{r-2}}\sum_{\substack{|q-p|\leq 2 \\ p\geq-1}}\lambda_q^{2s}\|b_q\|_2\\
\lesssim &\sum_{p> Q+2}\lambda_p^{\frac3r}\|b_p\|_2\sum_{\substack{|q-p|\leq 2 \\ p\geq-1}}\lambda_q^{2s}\|b_q\|_2\sum_{Q<p'\leq p-2}\lambda_{p'}\|u_{p'}\|_r\\
\lesssim & c_r\mu \sum_{p> Q}\lambda_p^{2s+\frac3r}\|b_p\|_2^2\sum_{Q<p'\leq p-2}\lambda_{p'}^{2-\frac3r}\\
\lesssim & c_r\mu \sum_{p> Q}\lambda_p^{2s+2}\|b_p\|_2^2\sum_{Q<p'\leq p-2}\lambda_{p'-p}^{2-\frac3r}\\
\lesssim & c_r\mu \sum_{p> Q}\lambda_p^{2s+2}\|b_p\|_2^2,
\end{split}
\end{equation}
since $r\geq 2$.  The last four inequalities together indicate 
\begin{equation}\label{est-l2}
|L_2|\lesssim c_r\mu \sum_{p> Q}\lambda_p^{2s+2}\|b_p\|_2^2+f(t)\sum_{q\geq -1}\lambda_q^{2s}\|b_q\|_2^2.
\end{equation}

To estimate $L_{3}$, we use integration by parts first, and then split it as
\begin{equation}\label{ineq-l3}
\begin{split}
|L_{3}|\leq&\sum_{p\geq -1}\lambda_q^{2s}\sum_{-1\leq q\leq p+2}\left|\int_{\mathbb R^3}\Delta_q(\tilde b_p\otimes u_p)\nabla b_q \, dx\right|\\
=&\sum_{p>Q}\lambda_q^{2s}\sum_{-1\leq q\leq p+2}\left|\int_{\mathbb R^3}\Delta_q(\tilde b_p\otimes u_p)\nabla b_q \, dx\right|\\
&+\sum_{-1\leq p\leq Q}\lambda_q^{2s}\sum_{-1\leq q\leq p+2}\left|\int_{\mathbb R^3}\Delta_q(\tilde b_p\otimes u_p)\nabla b_q \, dx\right|\\
\equiv & L_{31}+L_{32}.
\end{split}
\end{equation}
Proceeding as in the case of $I_3$, we obtain
\begin{equation}\notag
\begin{split}
|L_{31}|
\lesssim &\sum_{p> Q}\|u_p\|_\infty\|b_p\|_2\sum_{-1\leq q\leq p+2}\lambda_q^{2s+1}\|b_q\|_2\\
\lesssim &c_r\mu\sum_{p> Q}\lambda_p\|b_p\|_2\sum_{-1\leq q\leq p+2}\lambda_q^{2s+1}\|b_q\|_2\\
\lesssim &c_r\mu\sum_{p> Q}\lambda_p^{s+1}\|b_p\|_2\sum_{-1\leq q\leq p+2}\lambda_q^{s+1}\|b_q\|_2\lambda_{q-p}^{s}\\
\lesssim &c_r\mu\sum_{q\geq -1}\lambda_q^{2s+2}\|b_q\|_2^2;
\end{split}
\end{equation}
and 
\begin{equation}\notag
\begin{split}
|L_{32}|
\lesssim &\sum_{-1\leq p\leq Q}\|u_p\|_\infty\|b_p\|_2\sum_{-1\leq q\leq p+2}\lambda_q^{2s+1}\|b_q\|_2\\
\lesssim &f(t)\sum_{-1\leq p\leq Q}\lambda_p^{-1}\|b_p\|_2\sum_{-1\leq q\leq p+2}\lambda_q^{2s+1}\|b_q\|_2\\
\lesssim &f(t)\sum_{-1\leq p\leq Q}\lambda_p^{s}\|b_p\|_2\sum_{-1\leq q\leq p+2}\lambda_q^{s}\|b_q\|_2\lambda_{q-p}^{s+1}\\
\lesssim &f(t)\sum_{-1\leq q\leq Q+2}\lambda_q^{2s}\|b_q\|_2^2.
\end{split}
\end{equation}
Putting together the last three inequalities, we obtain
\begin{equation}\label{est-l3}
|L_3|\lesssim c_r\mu\sum_{q\geq -1}\lambda_q^{2s+2}\|b_q\|_2^2+f(t)\sum_{-1\leq q\leq Q+2}\lambda_q^{2s}\|b_q\|_2^2.
\end{equation}
The combination of equations (\ref{eq-l})-(\ref{eq-l1}) and inequalities (\ref{est-l11})-(\ref{est-l3}) yields
\begin{equation}\label{est-l}
|L-L_{12}|\lesssim c_r\mu\sum_{q\geq -1}\lambda_q^{2s+2}\|b_q\|_2^2+f(t)\sum_{ q\geq -1}\lambda_q^{2s}\|b_q\|_2^2.
\end{equation}

Thus, the estimate (\ref{est-jl}) is a consequence of (\ref{eq-j12l12}), (\ref{est-j}) and (\ref{est-l}). 

\cbdu

\begin{Lemma}\label{le-K} For $\frac 12< s<1$ and $2\leq r<\frac 3s$, we have
\begin{equation}\label{est-k}
|K|\lesssim c_r\mu\sum_{q\geq -1}\lambda_q^{2s+2}\|b_q\|_2^2+f(t)\sum_{q\geq -1}\lambda_q^{2s}\|b_q\|_2^2.
\end{equation}
\end{Lemma}
\pf
As before, we start with Bony's decomposition,
\begin{equation}\label{eq-k}
\begin{split}
K=
&\sum_{q\geq -1}\sum_{\substack{|q-p|\leq 2 \\ p\geq-1}}\lambda_q^{2s}\int_{\R^3}\Delta_q(u_{\leq p-2}\cdot\nabla b_p)\cdot b_q\, dx\\
&+\sum_{q\geq -1}\sum_{\substack{|q-p|\leq 2 \\ p\geq-1}}\lambda_q^{2s}\int_{\R^3}\Delta_q(u_{p}\cdot\nabla b_{\leq{p-2}})\cdot b_q\, dx\\
&+\sum_{q\geq -1}\sum_{\substack{p\geq q-2 \\ p\geq-1}}\lambda_q^{2s}\int_{\R^3}\Delta_q(u_p\cdot\nabla\tilde b_p)\cdot b_q\, dx\\
=&K_{1}+K_{2}+K_{3},
\end{split}
\end{equation}
with 
\begin{equation}\label{eq-k1}
\begin{split}
K_{1}=&\sum_{q\geq -1}\sum_{\substack{|q-p|\leq 2 \\ p\geq-1}}\lambda_q^{2s}\int_{\R^3}[\Delta_q,u_{\leq{p-2}}\cdot\nabla] b_p\cdot b_q\, dx\\
&+\sum_{q\geq -1}\lambda_q^{2s}\int_{\R^3}(u_{\leq{q-2}}\cdot\nabla) b_q \cdot b_q\, dx\\
&+\sum_{q\geq -1}\sum_{\substack{|q-p|\leq 2 \\ p\geq-1}}\lambda_q^{2s}\int_{\R^3}((u_{\leq{p-2}}-u_{\leq{q-2}})\cdot\nabla)\Delta_qb_p\cdot  b_q\, dx\\
=&K_{11}+K_{12}+K_{13}.
\end{split}
\end{equation}
Here we used $\sum_{|q-p|\leq 2}\Delta_pb_q=b_q$ to obtain $K_{12}$.
We further notice that $K_{12}=0$ by applying integration by parts and the fact $\nabla\cdot u_{\leq{q-2}}=0$. 

Comparing $K_{11}$ in (\ref{eq-k1}) and $I_{11}$ in (\ref{eq-i1}), one can see that $K_{11}$ shares the same structure as $I_{11}$ (with $u_p$ and $u_q$ replaced by $b_p$ and $b_q$).
Thus, in view of (\ref{est-i11}), we have for $r\geq 2$
\begin{equation}\notag
|K_{11}|\lesssim c_r\mu\sum_{q>Q+2}\lambda_q^{2s+2}\|b_q\|_2^2+f(t)\sum_{q\geq -1}\lambda_q^{2s}\|b_q\|_2^2.
\end{equation}
Similarly, examining the equations (\ref{eq-k1}) and (\ref{eq-j1}), (\ref{eq-k}) and (\ref{eq-j}) respectively, we realize that $K_{13}$ and $J_{13}$ have essentially the same structure, and so do $K_{2}$ and $J_{2}$. Hence, it follows from (\ref{est-j13}) and (\ref{est-j2}) that for $\frac 12< s<1$ and $2\leq r<\frac 3s$
\begin{equation}\notag
|K_{13}|+|K_{2}|\lesssim c_r\mu\sum_{q\geq -1}\lambda_q^{2s+2}\|b_q\|_2^2+f(t)\sum_{-1\leq q\leq Q+2}\lambda_q^{2s}\|b_q\|_2^2.
\end{equation} 
Regarding $K_{3}$, we first use integration by parts 
\begin{equation}\notag
\begin{split}
|K_{3}|\leq&\sum_{p\geq -1}\lambda_q^{2s}\sum_{-1\leq q\leq p+2}\left|\int_{\mathbb R^3}\Delta_q(u_p\otimes\tilde b_p)\nabla b_q \, dx\right|\\
\end{split}
\end{equation}
and then notice that it has the same structure as $L_3$ in (\ref{ineq-l3}). Thus, it follows from (\ref{est-l3}) that for $r\geq 2$ and $s\geq 0$
\begin{equation}\notag
|K_3|\lesssim c_r\mu\sum_{q\geq -1}\lambda_q^{2s+2}\|b_q\|_2^2+f(t)\sum_{-1\leq q\leq Q+2}\lambda_q^{2s}\|b_q\|_2^2.
\end{equation}
Combining the estimates above and equations (\ref{eq-k}) and (\ref{eq-k1}) leads to the conclusion of the lemma.

\cbdu

In order to finish the proof of Proposition \ref{prop}, we conclude from inequality (\ref{ineq-ubq}) and estimates in Lemmas \ref{le-I}-\ref{le-K} that for some small enough constant $\tilde c$
we have
\begin{equation}\notag
\frac{d}{dt}\sum_{q\geq -1}\lambda_q^{2s}\left(\|u_q\|_2^2+\|b_q\|_2^2\right)
\lesssim f(t)\sum_{q\geq -1}\lambda_q^{2s}\left(\|u_q\|_2^2+\|b_q\|_2^2\right),
\end{equation}
provided $c_r \leq \tilde c$. This means that there exists an adimensional constant $C=C(s)$, such that
\begin{equation}\notag
\frac{d}{dt}\left(\|u\|_{ H^s}^2+\|b\|_{H^s}^2\right)
\leq Cf(t)\left(\|u\|_{ H^s}^2+\|b\|_{ H^s}^2\right), 
\end{equation}
which is the estimate (\ref{energy-m}).
Notice that the requirements $\frac 12< s<1$ and $2\leq r<\frac 3s$ in Lemmas \ref{le-J} and \ref{le-K} 
indicate that (\ref{energy-m}) holds for any $2\leq r<6$.
However, when $b \equiv 0$, we have $J \equiv K \equiv L =0$, and the range for $r$ extends to $[2, \infty]$.\\

\noindent {\bf Step (ii):}
First note that by Gr\"onwall's inequality  $\|u\|_{ H^s}^2+\|b\|_{ H^s}^2$ is 
bounded on $[T/2,T)$ provided $f\in L^1(0,T)$.  Our next goal is to weaken the
condition $f \in L^1$ to (\ref{criterion}). For this purpose, we will prove 

\begin{Proposition}\label{prop2}
Let $C$ be the constant in (\ref{energy-m}) and $\bar c=\frac{1}{2C}(s-\frac12)\ln 2$.
Assume that
\begin{equation}\label{criterion2}
\limsup_{q \to \infty} \int_{T/2}^T 1_{q\leq Q(\tau)}\lambda_q\|u_q\|_\infty \, d\tau \leq \bar c.
\end{equation}
Let $q^*$ be an integer such that
\[
\int_{T/2}^T 1_{q\leq Q(\tau)}\lambda_q\|u_q\|_\infty \, d\tau \leq 2\bar c,
\qquad \forall q > q^*.
\]
There exists a constant 
\[M_{q^*}\sim q^*\lambda_{q^*}^{\frac52}(\|u(0)\|_2+\|b(0)\|_2),
\]
such that 
\begin{equation}\label{f1-w}
\exp\left(\int_{T/2}^tCf(\tau)d\tau\right) \leq e^{TCM_{q^*}}  \min\{\nu,\mu\}^{-1} \sup_{\tau \in [T/2,t]}\|u(\tau)\|_{H^s}.
\end{equation}

\end{Proposition}
\pf
We first split $f$ as
\[
f(t)\leq f_{\leq q^*} + f_{>q^*}(t),
\]
with
\[
 f_{\leq q^*} =   \sum_{q \leq q^*} \lambda_q \|u_q\|_\infty,
\qquad f_{> q^*}(t)= 1_{q*<Q(t)}\sum_{q^*<q\leq Q(t)} \lambda_q\|u_q\|_\infty.
\]
Thanks to Bernstein's and energy inequalities, we have
\[
f_{\leq q^*}(t) \lesssim q^*\lambda_{q^*}^{\frac52} (\|u(0)\|_2 + \|b(0)\|_2) \leq M_{q^*}, \qquad t \in (0,T).
\]
It follows that
\begin{equation}\label{f1-low}
\exp\left(\int_{T/2}^t Cf_{\leq q^*}(\tau) \, d\tau\right) \leq e^{T C M_{q^*}}.
\end{equation}

By definition of $\Lambda_r(t)$ (see (\ref{Q})) and Bernstein's inequality we have
\[
c_r\min\{\nu,\mu\} \Lambda_r^{1-\frac{3}{r}} \leq \|u_Q\|_r
\lesssim \Lambda_r^{\frac{3}{2}-\frac{3}{r}} \|u_Q\|_2,
\]
provided $\Lambda_r>1$. Therefore
\[
c_r\min\{\nu,\mu\} \Lambda_r^{\varepsilon}(t) \lesssim \Lambda_r^{\frac{1}{2}+\varepsilon}\|u_Q\|_2 \lesssim 
 \|u\|_{H^{s}},
\]
with $\varepsilon = s-\frac{1}{2}>0$, which leads to
\begin{equation} \label{Qbar}
2^{\varepsilon \bar Q(t)}  \lesssim  \frac{1}{c_r\min\{\nu,\mu\}} \sup_{\tau \in [T/2,t]}\|u(\tau)\|_{H^s},
\end{equation}
where
\[
\bar Q(t) = \sup_{\tau \in (T/2,t]} Q(\tau).
\]

Then we can estimate the integral of $f_{> q*}$ over $(T/2, T)$ as follows, 
\[
\begin{split}
\int_{T/2}^t f_{>q^*}(\tau) \, d\tau &=  \int_{T/2}^t 1_{q*<Q(\tau)}\sum_{q^*<q\leq Q(\tau)} \lambda_q\|u_q\|_\infty \, d\tau\\
& =  \int_{T/2}^t 1_{q*<Q(\tau)} 1_{q\leq Q(\tau)}\sum_{q^*<q\leq Q(\tau)} \lambda_q\|u_q\|_\infty \, d\tau\\
&\leq \sum_{q^*<q\leq \bar Q(t)} \int_{T/2}^t 1_{q\leq Q(\tau)} \lambda_q\|u_q\|_\infty \, d\tau\\
&\leq \bar Q(t) \sup_{q>q^*}\int_{T/2}^T 1_{q\leq Q(\tau)}\lambda_q\|u_q\|_\infty \, d\tau\\
&\leq \bar Q(t) 2\bar c\\
&= \frac{\bar Q(t) \varepsilon \ln 2}{C}.
\end{split}
\]
Combining the last inequality with (\ref{Qbar}) yields
\begin{equation}\label{f1-high}
\begin{split}
\exp\left(\int_{T/2}^t Cf_{>q^*}(\tau) \, d\tau\right) &\leq 2^{\varepsilon \bar Q(t)}
\lesssim c_r^{-1}\min\{\nu,\mu\}^{-1} \sup_{\tau \in [T/2,t]}\|u(\tau)\|_{H^s}.
\end{split}
\end{equation}
Therefore, the estimate (\ref{f1-w}) is a consequence of (\ref{f1-low}) and (\ref{f1-high}).

\cbdu

\noindent{\bf{Step (iii):}} With the preparations done in Step (i) and Step (ii), we are ready to finish the proof of Theorem \ref{thm}.

First, notice that Lebesgue norms of Littlewood-Paley projections of $\nabla\times u$ and $\nabla u$ are equivalent. Indeed, since $\nabla \cdot u_q =0$, there exists a vector potential $A_q$ defined as $A_q=(-\Delta)^{-1} (\nabla\times u_q)$; 
consequently, $\nabla \times u_q = - \Delta A_q$ and $\nabla\cdot A_q=0$. Thus, we have
\begin{equation}\label{equiv-u}
\begin{split}
\|\nabla u_q\|_p\sim\lambda_q \|u_q\|_p = &\lambda_q \|\nabla \times A_q\|_p \\
\lesssim &\lambda_q^2 \|A_q\|_p \sim \| \Delta A_q\|_p = \|\nabla \times u_q\|_p\\
 & \ \ \ \ \ \ \ \ \ \ \ \ \ \ \ \ \ \ \ \ \ \ \ \ {\color{white}..} \lesssim \lambda_q \|u_q\|_p\sim \|\nabla u_q\|_p,
\end{split}
\end{equation}
for all $1 \leq p \leq \infty$. In particular, there exists $M>0$, such that
\begin{equation}\label{curl-gr}
\lambda_q \|u_q\|_p \leq M \|\nabla \times u_q\|_p, \qquad 1 \leq p \leq \infty.
\end{equation}

Now given $2\leq r<6$ ($2\leq r \leq \infty$ when $b\equiv 0$), let
\begin{equation}\label{parameter-cr}
c_r= \min\{\tilde c, \bar c/M\},
\end{equation}
where $\tilde c$ and $\bar c$ are constants from Propositions~\ref{prop} and \ref{prop2} respectively. We point out that the choice of $c_r$ in (\ref{parameter-cr}) is consistent with the condition $c_r\leq \tilde c$ in Proposition \ref{prop}.

Now we fix
\[
\textstyle s \in \left(\frac{1}{2}, \min\{1, \frac{3}{r}\}\right).
\]
Recall assumption \eqref{criterion} in Theorem~\ref{thm}
\[
\limsup_{q \to \infty} \int_{T/2}^T 1_{q\leq Q(\tau)}\|\Delta_q(\nabla \times u)\|_\infty \, d\tau  \leq c_r.
\]
Since $c_r \leq \bar c/M$, assumption \eqref{criterion2} of Proposition~\ref{prop2} holds thanks to (\ref{curl-gr}).
Applying Gr\"onwall's inequality to (\ref{energy-m}) and employing (\ref{f1-w}) from Proposition \ref{prop2},
we infer
\begin{equation}\label{energy-31}
\begin{split}
&\|u(t)\|_{ H^s}^2+\|b(t)\|_{H^s}^2 \\
\leq &\exp\left(\int_{T/2}^t Cf(\tau) \, d\tau\right)
\left( \|u(T/2)\|_{ H^s}^2+\|b(T/2)\|_{H^s}^2\right) \\
\lesssim & e^{T C M_{q^*}}c_r^{-1}\min\{\nu,\mu\}^{-1} \sup_{\tau \in [T/2,t]}\|u(\tau)\|_{H^s}
\left( \|u(T/2)\|_{ H^s}^2+\|b(T/2)\|_{H^s}^2\right).
\end{split}
\end{equation}
Taking $\sup$ in time in (\ref{energy-31}) yields 
\begin{equation}\label{energy-32}
\sup_{\tau \in [T/2,t]}\|u(\tau)\|_{H^s}\lesssim e^{T C M_{q^*}}c_r^{-1}\min\{\nu,\mu\}^{-1}
\left( \|u(T/2)\|_{ H^s}^2+\|b(T/2)\|_{H^s}^2\right).
\end{equation}
Inserting the bound (\ref{energy-32}) on the right hand side of (\ref{energy-31}), we obtain
\[
\|u(t)\|_{ H^s}^2+\|b(t)\|_{H^s}^2 \lesssim e^{2TCM_{q^*}}c_r^{-1}\min\{\nu,\mu\}^{-2}\left( \|u(T/2)\|_{ H^s}^2+\|b(T/2)\|_{H^s}^2\right)^{2},
\]
for all $t \in (T/2,T)$, which implies that $(u,b)$ is regular on $(0,T]$.

\subsection{Alternative versions of the criterion}
In this subsection we show that regularity conditions stated in Theorem~\ref{thm:compare} are stronger than condition \eqref{criterion}, i.e., Theorem~\ref{thm:compare} is a corollary of Theorem~\ref{thm}.

\begin{Lemma}
Let 
\[
\mathcal{T}_q = \sup\{t \in (T/2,T): Q(\tau) < q, \ \forall \tau \in(T/2,t)\}.
\]
Then $\mathcal{T}_q$ is increasing and
\begin{equation} \label{eq:linT_q_in_a_lemma}
\lim_{q \to \infty} \mathcal{T}_q = T.
\end{equation}
Moreover, either condition (1), (2), (3), or (8) in Theorem \ref{thm:compare} implies regularity criterion (\ref{criterion})
\[
\limsup_{q \to \infty} \int_{T/2}^T 1_{q\leq Q(\tau)}\|\Delta_q(\nabla \times u)\|_\infty \, d\tau  \leq c_r,
\]
 in Theorem \ref{thm}.
\end{Lemma}
\pf
Recall that
\[
\bar Q(t) = \sup_{\tau \in (T/2,t]} Q(\tau),
\]
and hence
\begin{equation} \label{wq:T_q_vs_Q}
\begin{split} 
\mathcal{T}_q &= \sup\{t \in (T/2,T):  Q(\tau) < q, \ \forall \tau \in(T/2,t) \}\\
&= \sup\{t \in (T/2,T): \bar Q(t) \leq q \}.
\end{split}
\end{equation}
Note that $\mathcal{T}_q$ and $\bar Q(t)$ are increasing by definition. In addition, \eqref{Qbar} and the fact that
$(u,b)$ is regular on $(0,T)$ imply that $\bar Q(t)$ is bounded on $[T/2, \tau]$ for every $\tau \in (T/2, T)$, and, consequently, \eqref{eq:linT_q_in_a_lemma} holds.
Indeed, note that $\mathcal T_q$ is the time when $\bar Q(t)$ crosses $q$ due to \eqref{wq:T_q_vs_Q}. So if 
\[\lim_{q\to\infty} \mathcal T_q=T'<T,\]
then $\lim_{t \to T'} \bar Q(t)=\infty$, thanks to the definition of $Q(t)$ and the fact that solution is regular on $[T/2, T']$. That is, $\bar Q(t)$ is unbounded on $[T/2, T']$,  which leads to a contradiction.

\vspace{0.1 in}
\noindent
{\bf Condition (1) implies regularity.} To show that condition (1) in Theorem \ref{thm:compare} implies inequality (\ref{criterion}) in Theorem \ref{thm}, we observe that
\[
\begin{split}
\limsup_{q \to \infty} \int_{T/2}^T 1_{q\leq Q(t)}\|\Delta_q(\nabla\times u)\|_\infty \, dt
&\leq \limsup_{q \to \infty} \int_{T/2}^T 1_{q\leq \bar Q(t)}\|\Delta_q(\nabla\times u)\|_\infty \, dt\\
&= \limsup_{q \to \infty} \int_{\mathcal{T}_q}^T \|\Delta_q(\nabla\times u)\|_\infty \, dt,
\end{split}
\]
where we used \eqref{wq:T_q_vs_Q} to infer the last inequality.

\vspace{0.1 in}
\noindent
{\bf Condition (2) implies regularity.} This follows from the fact that for every $\varepsilon >0$,
\[
\limsup_{q \to \infty} \int_{\mathcal{T}_q}^T \|\Delta_q(\nabla \times u)\|_\infty \, dt \leq  \limsup_{q \to \infty} \int_{T-\varepsilon}^T\|\Delta_q(\nabla \times u)\|_\infty \, dt,
\]
since $\mathcal T_q \to T$ as $q \to \infty$.

\vspace{0.1 in}
\noindent
{\bf Condition (3) implies regularity.}
Now suppose that 
\[
\int_{T/2}^T \sup_{q\leq Q(t)} \|\Delta_q(\nabla \times u)\|_\infty \, dt < \infty,
\]
which implies that
\[
\limsup_{q \to \infty} \int_{\mathcal T_q}^T \sup_{q_1 \leq Q(t)} \|\Delta_{q_1}(\nabla\times u)\|_\infty \, dt =0,
\]
because $\mathcal T_q \to T$ as $q \to \infty$. Therefore, employing the fact that $Q(t)\leq \bar Q(t)$ and (\ref{wq:T_q_vs_Q}), we have
\[
\begin{split}
&\limsup_{q \to \infty} \int_{T/2}^T 1_{q\leq Q(t)}\|\Delta_q(\nabla\times u)\|_\infty \, dt \\
\leq &
\limsup_{q \to \infty} \int_{T/2}^T 1_{q\leq Q(t)} \sup_{q_1 \leq Q(t)} \|\Delta_{q_1}(\nabla\times u)\|_\infty \, dt\\
\leq &\limsup_{q \to \infty} \int_{T/2}^T 1_{q\leq \bar Q(t)} \sup_{q_1 \leq Q(t)} \|\Delta_{q_1}(\nabla\times u)\|_\infty \, dt\\
=& \limsup_{q \to \infty} \int_{\mathcal T_q}^T \sup_{q_1 \leq Q(t)} \|\Delta_{q_1}(\nabla\times u)\|_\infty \, dt\\
 =&0,
\end{split}
\]
where the fact that $\mathcal T_q$ is the time when $\bar Q(t)$ crosses $q$ is used again to obtain the last second equality in the estimate. As a consequence, inequality (\ref{criterion}) in Theorem \ref{thm} is satisfied.

\vspace{0.1 in}
\noindent
{\bf Condition (8) implies regularity.}
Now we show that (8) in Theorem~\ref{thm:compare} implies \eqref{criterion}. First, the regularity of $u(t)$ on $(0,T)$  yields that $u(t)$ is smooth for every $t \in (0,T)$, and hence
\[
\limsup_{q \to \infty} \lambda_q^{-1+\frac3{r}}\|u_q(t)\|_{r}= 0, \qquad \text{for all} \qquad t \in (0,T).
\]
Then due to the weak continuity of $u(t)$ in $L^2(\mathbb R^3)$, reflected in the continuity of $\|u_q(t)\|_2$ on $[0,T]$, the smallness of the jump (8) implies that 
\begin{equation}\label{comp-2}
\begin{split}
&\limsup_{q \to \infty} \lambda_q^{-1+\frac3{r}}\|u_q(T)\|_{r} \\
\leq &\ \limsup_{q \to \infty} \lambda_q^{-1+\frac3{r}}\|u_q(T)-u_q(t)\|_{r}+\limsup_{q \to \infty} \lambda_q^{-1+\frac3{r}}\|u_q(t)\|_{r}\\
< &\ \frac{c_r}{2}\min\{\nu,\mu\},
\end{split}
\end{equation}
provided $t$ is close enough to $T$.
Using the smallness of the jump (8) again and (\ref{comp-2}), we can infer that there exists $\varepsilon>0$, such that
\[
\begin{split}
&\limsup_{q \to \infty} \sup_{t \in [T-\varepsilon, T]} \lambda_q^{-1+\frac3{r}}\|u_q(t)\|_{r} \\
\leq &\  \sup_{t \in [T-\varepsilon, T]} \sup_{q\geq -1 }\lambda_q^{-1+\frac3{r}}\|u_q(t)-u_q(T)\|_{r} 
+\limsup_{q \to \infty} \lambda_q^{-1+\frac3{r}}\|u_q(T)\|_{r}\\
< &\ \frac{c_r}{2}\min\{\nu,\mu\}+\frac{c_r}{2}\min\{\nu,\mu\}= c_r\min\{\nu,\mu\}.
\end{split}
\]
Therefore there exists $q^*$, such that
\[
\lambda_{q}^{-1+\frac3{r}}\|u_{q}(t)\|_{r} < c_r\min\{\nu,\mu\}, \qquad \forall q> q^*,t \in [T-\varepsilon, T],
\] 
which implies that $\Lambda_r(t) \leq 2^{q^{*}}$ for all $t \in [T-\varepsilon, T]$ (see \eqref{wave}), and in fact on $[T/2,T]$ due to the regularity of $u(t)$ on $(0,T)$.
Recall that $\Lambda_r(t) = 2^{Q(t)}$ and hence $Q(t)$ is bounded on $[T/2,T]$ as well. On the other hand, it follows from Bernstein's inequality and the fact that $u\in L^\infty(T/2,T; L^2)$, 
\[ \|u_q\|_\infty \lesssim 2^{3q/2}\|u_q\|_2 \in L^\infty(T/2,T) \ \ \mbox{for every fixed} \ \ q\geq -1.\] 
Therefore, we have
\[\sup_{q\leq Q(t)} \lambda_q \|u_q\|_\infty \in L^\infty(T/2,T).\] 
Consequently, in view of (\ref{equiv-u}), the fact that $Q(t)\leq \bar Q(t)$ and (\ref{wq:T_q_vs_Q}), we infer
\[
\begin{split}
\limsup_{q \to \infty} \int_{T/2}^T 1_{q\leq Q(t)}\|\Delta_q(\nabla \times u)\|_\infty \, dt  &\lesssim
\limsup_{q \to \infty} \int_{T/2}^T 1_{q\leq Q(t)}\lambda_q \|u_q\|_\infty \, dt\\
&= \limsup_{q \to \infty} \int_{T/2}^T 1_{q\leq \bar Q(t)} 1_{q\leq Q(t)}\lambda_q \|u_q\|_\infty \, dt\\
&= \limsup_{q \to \infty} \int_{\mathcal{T}_q}^T 1_{q\leq Q(t)}\lambda_q \|u_q\|_\infty \, dt\\
&\leq \limsup_{q \to \infty} (T-{\mathcal{T}_q})  \sup_{q\leq Q(t)}\lambda_q \|u_q\|_\infty\\
&\lesssim \limsup_{q \to \infty} (T-{\mathcal{T}_q})\\
&=0, 
\end{split}
\]
because $\mathcal T_q \to T$ as $q \to \infty$. Hence \eqref{criterion} holds.

\cbdu

Finally, to establish that either of the conditions (4)--(7) in Theorem \ref{thm:compare} implies inequality (\ref{criterion}) in Theorem \ref{thm} and hence the regularity on $(0,T]$,  we need the following.
\begin{Lemma}\label{compare1}
For all $1\leq l< \infty$ and $1\leq r\leq \infty$,
\begin{equation}\label{comp-3}
\begin{split}
&\limsup_{q \to \infty}\int_{T/2}^T1_{q\leq Q(t)}\lambda_q\|u_q(t)\|_\infty\, dt\\
&\leq (c_r \min\{\nu,\mu\})^{1-l}  \limsup_{q \to \infty}\int_{T/2}^T 1_{q\leq Q(t)}\left(\lambda_q^{-1+\frac2l+\frac{3}{r}}\|u_q(t)\|_{r}\right)^l\, dt\\
&\leq (c_r \min\{\nu,\mu\})^{1-l}  \limsup_{q \to \infty}\int_{\mathcal{T}_q}^T \left(\lambda_q^{-1+\frac2l+\frac{3}{r}}\|u_q(t)\|_{r}\right)^l\, dt.
\end{split}
\end{equation}
\end{Lemma}
\pf
First, we notice that $\lambda_q\leq \Lambda_{r}(t)=2^{Q(t)}$ for $q\leq Q(t)$ and hence
\begin{equation}\label{comp-4}
1\leq \left(\Lambda_{r}(t)/\lambda_q\right)^{2-\frac2{l}}  \ \ \ \ \ \mbox {for}\ \ l\geq 1 \ \ \mbox{and} \ \ q\leq Q(t).
\end{equation}
Concurrently, it follows from (\ref{Q}) that 
\[1\leq (c_r \min\{\nu,\mu\})^{-1}\Lambda_{r}^{\frac3r-1}(t) \|u_Q(t)\|_{r} \]
and hence for $l\geq 1$
\begin{equation}\label{comp-5}
1\leq (c_r \min\{\nu,\mu\})^{-(l-1)}\Lambda_{r}^{\left(\frac3r-1\right)(l-1)}(t) \|u_Q(t)\|_{r}^{l-1}.
\end{equation}
Using Bernstein's inequality, inserting (\ref{comp-4})-(\ref{comp-5}) to the resulted integral, and applying H\"older's inequality, we deduce
\begin{equation}\notag
\begin{split}
&\limsup_{q \to \infty}\int_{T/2}^T 1_{q\leq Q(t)}\lambda_q\|u_q(t)\|_\infty\, dt\\
\leq& \limsup_{q \to \infty}\int_{T/2}^T 1_{q\leq Q(t)}  \lambda_q^{1+\frac3{r}}\|u_q(t)\|_{r}\, dt\\
\leq& \limsup_{q \to \infty}\int_{T/2}^T 1_{q\leq Q(t)}\Lambda_{r}^{2-\frac2l}(t)\lambda_q^{-1+\frac2l+\frac{3}{r}}\|u_q(t)\|_{r}\, dt\\
\leq& (c_r \min\{\nu,\mu\})^{1-l} \limsup_{q \to \infty}\int_{T/2}^T \left(1_{q\leq Q(t)}^{1-\frac1l}\Lambda_{r}^{2-\frac2l+\left(\frac3r-1\right)(l-1)}(t)\|u_Q(t)\|_r^{l-1}\right)\\
&\hspace{1.5in} \cdot \left(1_{q\leq Q(t)}^{\frac1l}\lambda_q^{-1+\frac2l+\frac{3}{r}}\|u_q(t)\|_{r}\right)\, dt\\
\leq &(c_r \min\{\nu,\mu\})^{1-l} \limsup_{q \to \infty}\left(\int_{T/2}^T 1_{q\leq Q(t)} \left(\Lambda_{r}^{-1+\frac2l+\frac3r}(t)\|u_Q(t)\|_{r}\right)^l\, dt\right)^{\frac{l-1}{l}} \\
& \hspace{1.5in} \cdot \left(\int_{T/2}^T 1_{q\leq Q(t)}\left(\lambda_q^{-1+\frac2l+\frac{3}{r}}\|u_q(t)\|_{r}\right)^l\, dt\right)^{\frac{1}{l}} \\
\leq &(c_r \min\{\nu,\mu\})^{1-l}  \limsup_{q \to \infty} \int_{T/2}^T 1_{q\leq Q(t)}\left(\lambda_q^{-1+\frac2l+\frac{3}{r}}\|u_q(t)\|_{r}\right)^l\, dt.
\end{split}
\end{equation}
Thus, the first inequality in (\ref{comp-3}) is justified. In order to see the second inequality in (\ref{comp-3}), we again use the fact that $\mathcal T_q$ is the first time when $\bar Q(t)$ reaches $q$:
\begin{equation}\notag
\begin{split}
& \limsup_{q \to \infty} \int_{T/2}^T 1_{q\leq Q(t)}\left(\lambda_q^{-1+\frac2l+\frac{3}{r}}\|u_q(t)\|_{r}\right)^l\, dt\\
\leq & \limsup_{q \to \infty} \int_{T/2}^T 1_{q\leq \bar Q(t)}\left(\lambda_q^{-1+\frac2l+\frac{3}{r}}\|u_q(t)\|_{r}\right)^l\, dt \\
=& \limsup_{q \to \infty}\int_{\mathcal{T}_q}^T \left(\lambda_q^{-1+\frac2l+\frac{3}{r}}\|u_q(t)\|_{r}\right)^l\, dt.
\end{split}
\end{equation}

\cbdu

As a consequence of Lemma \ref{compare1}, one can see that condition (\ref{criterion}) is satisfied if (4) or (5) in Theorem \ref{thm:compare} holds. On the other hand,  we have for any $\varepsilon>0$
\[
\limsup_{q \to \infty} \int_{\mathcal{T}_q}^T \left(\lambda_q^{-1+\frac2l+\frac{3}{r}}\|u_q(t)\|_{r}\right)^l \, dt \leq  \limsup_{q \to \infty} \int_{T-\varepsilon}^T \left(\lambda_q^{-1+\frac2l+\frac{3}{r}}\|u_q(t)\|_{r}\right)^l \, dt,
\]
since $\mathcal T_q \to T$ as $q \to \infty$, and hence

\[\limsup_{q \to \infty}\int_{\mathcal{T}_q}^T \left(\lambda_q^{-1+\frac2l+\frac{3}{r}}\|u_q(t)\|_{r}\right)^l\, dt\leq 
\lim_{\epsilon\to 0}\limsup_{q \to \infty}\int_{T-\epsilon}^T \left(\lambda_q^{-1+\frac2l+\frac{3}{r}}\|u_q(t)\|_{r}\right)^l\, dt.\]
Therefore, condition (\ref{criterion}) is also satisfied if (6) in Theorem \ref{thm:compare} holds.

In the end, we see that (7) implies (4) in Theorem \ref{thm:compare}, and hence it implies (\ref{criterion}).

\bigskip

\section*{Acknowledgement}

The authors would like to express their deep gratitude for the anonymous referee's valuable suggestions which have improved the manuscript significantly.

\bigskip


\begin{thebibliography}{XX}




\bibitem{BCD}
H. Bahouri, J. Chemin,  and R. Danchin.
\newblock {\em Fourier analysis and nonlinear partial differential equations}.
\newblock Grundlehrender Mathematischen Wissenschaften, 343. Springer, Heidelberg, 2011.

\bibitem{BKM}
J. Beale, T. Kato and A. Majda.
\newblock {\em Remarks on the breakdown of smooth solutions for the 3-D Euler equations}.
\newblock Comm. Math. Phys., 94: 61--66, 1984.

\bibitem{CKS}
R. Caflisch, I. Klapper and G. Steele.
\newblock {\em Remarks on singularities, dimension and energy dissipation for ideal hydrodynamics and MHD}.
\newblock Comm. Math. Phys., 184: 443--455, 1997.

\bibitem{CCM}
M. Cannone, Q. Chen, and C. Miao.
\newblock {\em A Losing estimate for the ideal MHD Equations with application to blow-up criterion}.
\newblock SIAM J. Math. Anal., 38(6): 1847--1859, 2007.

\bibitem{CG}
X. Chen and S. Gala.
\newblock {\em A remark on the Beale--Kato--Majda criterion for the 3D MHD equations with zero kinematic viscosity}.
\newblock Acta Mathematicae Applicatae Sinica, Vol. 28, No. 2: 209--214, 2012.

\bibitem{CMZ08}
Q. Chen, C. Miao and Z. Zhang.
\newblock {\em On the regularity criterion of weak solution for the 3D viscous Magneto-Hydrodynamics equations}.
\newblock Comm. Math. Phys., 284: 919--930, 2008.

\bibitem{CMZ}
Q. Chen, C. Miao and Z. Zhang.
\newblock {\em The Beale--Kato--Majda criterion for the 3D magneto-hydrodynamics equations}.
\newblock Comm. Math. Phys., 275(3): 861--872, 2007.


\bibitem{CCFS}
A. Cheskidov, P. Constantin, S. Friedlander and R. Shvydkoy.
\newblock {\em Energy conservation and Onsager's conjecture for the Euler equations}.
\newblock Nonlinearity, 21: 1233--1252, 2008.


\bibitem{CD-sqg-determ}
A. Cheskidov and M. Dai.
\newblock {\em Determining modes for the surface quasi-geostrophic equation}.
\newblock Physica D: Nonlinear Phenomena, available online: https://doi.org/10.1016/j.physd.2018.03.003.

\bibitem{CD}
A. Cheskidov and M. Dai.
\newblock {\em Norm inflation for generalized magneto-hydrodynamics system}.
\newblock Nonlinearity, (28): 129--142, 2015.

\bibitem{CD-attractor}
A. Cheskidov and M. Dai.
\newblock {\em The existence of a global attractor for the forced critical surface quasi-geostrophic equation in $L^2$}.
\newblock Journal of Mathematical Fluid Mechanics, DOI:10.1007/s00021-017-0324-7, 2017.

\bibitem{CDK}
A. Cheskidov, M. Dai, and L. Kavlie.
\newblock {\em Determining modes for the 3D Navier-Stokes equations}.
\newblock  Physica D: Nonlinear Phenomena, Vol. 374-375: 1--9, 2018.

\bibitem{CS10}
A. Cheskidov and R. Shvydkoy.
\newblock {\em The Regularity of Weak Solutions of the 3D Navier-Stokes Equations in $B^{-1}_{\infty,\infty}$}.
\newblock Arch. Rational Mech. Anal. 195: 159--169, 2010.

\bibitem{CS}
A. Cheskidov and R. Shvydkoy.
\newblock {\em A unified approach to regularity problems for the 3D Navier-Stokes and Euler equations: the use of Kolmogorov's dissipation range}.
\newblock J, Math. Fluid Mech. 16: 263--273, 2014.


\bibitem{Dsqg}
M. Dai.
\newblock {\em Regularity criterion and energy conservation for the supercritical Quasi-Geostrophic equation}.
\newblock Journal of Mathematical Fluid Mechanics, DOI:10.1007/s00021-017-0320-y, 2017.

\bibitem{Dhmhd}
M. Dai.
\newblock {\em Regularity criterion for the 3D Hall-magneto-hydrodynamics}.
\newblock Journal of Differential Equations, Vol. 261: 573--591, 2016.

\bibitem{DQS}
M. Dai, J. Qing, and M. E. Schonbek.
\newblock {\em Norm Inflation for Incompressible Magneto-Hydrodynamics System in $\dot{B}_\infty^{-1,\infty}$}.
\newblock Advances in Differential Equations, Vol. 16, No. 7--8, 725--746, 2011.

\bibitem{DL}
G. Duvaut, J. L. Lions.
\newblock {\em In\'equations en thermo\'elasticit\'e et magn\'etohydrodynamique}.
Arch. Rational Mech. Anal., 46(4): 241--279, 1972.


\bibitem{ESS}
L. Escauriaza, G. Seregin and V. Sverak.
\newblock {\em $L_{3,\infty}$-solutions of Navier-Stokes equations and backward uniqueness}.
\newblock Uspekhi Mat. Nauk, 58(2(350)): 3--44, 2003.


\bibitem{GKP}
I. Gallagher, G. S. Koch, and F. Planchon.
\newblock {\em  Blow-up of critical Besov norms at a potential Navier-Stokes singularity}.
\newblock Communications in Mathematical Physics, Vol. 343, Iss. 1: 39--82, 2016.

\bibitem{Gr}
L. Grafakos.
\newblock {\em Modern Fourier analysis}.
\newblock Second edition. Graduate Texts in Mathematics, 250. Springer, New York, 2009.


\bibitem{Ha}
A. Hasegawa.
\newblock {\em Self-organization processed in continuous media}.
\newblock Adv. in Phys., 34:1--42, 1985.

\bibitem{HW}
C. He and Y. Wang.
\newblock {\em On the regularity criteria for weak solutions to the magnetohydrodynamic equations}.
\newblock J. Differential Equations, 238(1): 1--17, 2007.

\bibitem{HX}
C. He and Z. Xin.
\newblock {\em On the regularity of weak solutions to the magnetohydrodynamic equations}.
\newblock J. Differential Equations, 213: 235--254, 2005.

\bibitem{Kato}
T. Kato.
\newblock {\em Strong $L^p$-solutions of Navier-Stokes equations in $R^m$
with applications to weak solutions}.
\newblock Math. Z. 187: 471--480, 1984.


\bibitem{KT}
H. Koch and D. Tataru.
\newblock {\em Well-posedness for the Navier--Stokes equations}.
\newblock Adv. Math., 157:22--35, 2001.


\bibitem{LZ}
Z. Lei and Y. Zhou.
\newblock {\em BKM's criterion and global weak solutions for magnetohydrodynamics with zero viscosity}.
\newblock Discrete Contin. Dyn. Syst., 25: 575--583, 2009.


\bibitem{L}
P. G. Lemari\'e-Rieusset.
\newblock {\em Recent developments in the Navier--Stokes problem}.
\newblock Chapman and Hall/CRC Research Notes in Mathematics, 431. Chapman  and Hall/CRC, Boca Raton, FL, 2002.

\bibitem{P}
F. Planchon,
\newblock {\em An Extension of the Beale-Kato-Majda Criterion for the Euler Equations}.
\newblock Commun. Math. Phys., 232, 319--326, 2003.


\bibitem{PPS}
H. Politano, A. Pouquet and P. L. Sulem.
\newblock {\em Current and vorticity dynamics in three-dimensional magnetohydrodynamic turbulence}.
\newblock Phys. Plasmas, 2: 2931--2939, 1995.

\bibitem{ST}
M. Sermange and R. Temam.
\newblock {\em Some mathematical questions related to the MHD equations}.
\newblock Comm. Pure Appl. Math., 36(5): 635--664, 1983.

\bibitem{Tao}
T. Tao.
\newblock{\em Quantitative bounds for critically bounded solutions to the Navier-Stokes equations}.
\newblock arXiv:1908.04958.

\bibitem{Tem}
R. Temam.
\newblock {\em Navier--Stokes Equations: Theory and Numerical Analysis}.
\newblock AMS Chelsea, Providence, Rhode Island, 2000.

\bibitem{Wu08}
J. Wu.
\newblock {\em Regularity criteria for the generalized MHD equations}.
\newblock Comm. Partial Differential Equations, 33(1-3): 285--306, 2008.

\bibitem{Wu04}
J. Wu.
\newblock {\em Regularity results for weak solutions of the 3D MHD equations}.
\newblock Discrete. Contin. Dynam. Systems, 10: 543--556, 2004.

\bibitem{Yo}
T. Yoneda.
\newblock {\em Ill-posedness of the 3D--Navier--Stokes equations in a generalized Besov space near $BMO^{-1}$}.
\newblock Journal of Functional Analysis, 258, no. 10: 3376--3387,  2010.


\bibitem{Zh05}
Y. Zhou.
\newblock {\em Remarks on regularities for the 3D MHD equations}.
\newblock Disc. Cont. Dyna. Sys., 12: 881--886, 2005.






\end{thebibliography}
\end{document}